 \newtheorem{theorem}{Theorem}[section]
 \newtheorem{corollary}[theorem]{Corollary}
 \newtheorem{lemma}[theorem]{Lemma}
 \newtheorem{proposition}[theorem]{Proposition}
\newtheorem{observation}[theorem]{Observation}
\theoremstyle{definition}
\newtheorem{definition}[theorem]{Definition}
\theoremstyle{remark}
\newtheorem{fact*}{Fact}
\newcommand{\hilbert}{\mathcal{H}}
\newcommand{\C}{\mathbb{C}}
\newcommand{\cc}[1]{\overline{#1}}
\newcommand{\norm}[1]{\left\Vert#1\right\Vert}
\newcommand{\tr}{\operatorname{tr}}
\newcommand{\ran}[1]{\operatorname{ran}#1}
\newcommand{\ip}[2]{\left\langle #1, #2 \right\rangle}
\newcommand{\ad}{^\ast}
\newcommand{\til}{\raise.17ex\hbox{$\scriptstyle\mathtt{\sim}$}}
\newcommand\la{\lambda}
\newcommand\beq{\begin{equation}}
\newcommand\eeq{\end{equation}}
\newcommand\black{\color{black}}
\newcommand\bbm{\begin{bmatrix}}
\newcommand\ebm{\end{bmatrix}}
\newcommand{\bpm}{\left( \begin{smallmatrix}}
\newcommand{\epm}{\end{smallmatrix} \right)}
\numberwithin{equation}{section}
\newcommand{\tensor}[2]{\text{ }{\begin{smallmatrix} #1 \\ \otimes\\ #2\end{smallmatrix}}\text{  }}
\newcommand{\flattensor}[2]{#1 \otimes #2}
\newlength{\Mheight}
\newlength{\cwidth}
\newcommand{\mc}{\settoheight{\Mheight}{M}\settowidth{\cwidth}{c}M\parbox[b][\Mheight][t]{\cwidth}{c}}
\newcommand{\dfn}[1]{{\bf #1}\index{#1}}
\title[Free functions with symmetry]{Free functions with symmetry}
\author[
David Cushing
	J. E. Pascoe
	Ryan Tully-Doyle	
]{
	David Cushing$^\dagger$ \\
	J. E. Pascoe$^{\ddagger}$ \\
	Ryan Tully-Doyle \\
}
\thanks{\\
$\dagger$ Partially supported by an EPSRC DTA grant and a Capstaff award.\\
$\ddagger$ Partially supported by National Science Foundation Grant DMS 1361720}
\date{\today}
\subjclass[2010]{Primary 46L52 Secondary  05E05, 13A50, 17A50,  	47A63  \and  47A56}
\keywords{Invariant theory, symmetric functions in noncommuting variables, noncommuatative invariant theory
\and free analysis}
\begin{document}

\begin{abstract}
In 1936, Margarete C. Wolf showed that the ring of symmetric free polynomials in two or more variables is
isomorphic to the ring of free polynomials in infinitely many variables.
We show that Wolf's theorem is a special case of a general theory of the ring of invariant
free polynomials: every ring of invariant free polynomials is isomorphic to
a free polynomial ring. Furthermore, we show that this isomorphism
extends to the free functional calculus as a norm-preserving isomorphism of function spaces
on a domain known as the row ball.
We give explicit constructions of the ring of invariant free polynomials in terms of representation theory and
develop a rudimentary theory of their structures. Specifically, we obtain
a generating function for the number of basis elements of a given degree
and explicit formulas for good bases in the abelian case.
\end{abstract}
\maketitle

\tableofcontents
\section{Introduction}



A \dfn{symmetric free polynomial} in the letters $x_1, \ldots, x_d$ is a free polynomial
$p$ which satisfies
	$$p(x_1,\ldots, x_d) = p(x_{\sigma(1)},\ldots , x_{\sigma(d)})$$
for all permutations $\sigma$ of $\{1,\ldots, d\}.$ For example,
	$$p(x_1,x_2) = x_1 + x_2 +7x_1x_1 + 7x_2x_2 + 3 x_1x_2 + 3x_2x_1$$
is a symmetric free polynomial in two variables.
Classically, Wolf proved the following theorem about the structure of
the ring of symmetric free polynomials \cite{wolf36}.
\begin{theorem}[Wolf]
The symmetric free polynomials in $d \geq 2$ variables is isomorphic to the ring of free polynomials in infinitely many variables.
\end{theorem}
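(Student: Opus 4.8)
The plan is to realize the ring $R := \C\langle x_1,\dots,x_d\rangle^{S_d}$ of symmetric free polynomials as a tensor algebra on an explicit countably infinite set of homogeneous generators, by organizing everything around a monomial order. First I would fix the lexicographic order on words coming from $x_1<x_2<\dots<x_d$; this totally orders the words of any fixed length, is compatible with concatenation, and for a \emph{homogeneous} free polynomial $f$ it singles out a leading (lex-largest) word $\mathrm{lw}(f)$, with $\mathrm{lw}(fg)=\mathrm{lw}(f)\,\mathrm{lw}(g)$ and no cancellation. A linear basis of $R$ in each degree $n$ is given by the orbit sums $m_{[w]}:=\sum_{w'\in S_d\cdot w} w'$ as $[w]$ ranges over the $S_d$-orbits of length-$n$ words, and $\mathrm{lw}(m_{[w]})$ is the lex-largest word of that orbit; call such a word \emph{dominant} (equivalently, $w\geq w^\sigma$ for all $\sigma$). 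Thus the dominant words index a basis of $R$ with pairwise distinct leading words, and the number of dominant words of length $n$ equals $\dim R_n$.

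The combinatorial core is to show that the set $D$ of dominant words is a \emph{free} submonoid of the free monoid on $x_1,\dots,x_d$. Two observations do this. (i) $D$ is closed under concatenation: if $u^\sigma\leq u$ and $v^\sigma\leq v$ for all $\sigma$, then comparing $(uv)^\sigma=u^\sigma v^\sigma$ with $uv$ at the first position where they differ gives $(uv)^\sigma\leq uv$. (ii) Every prefix of a dominant word is dominant: if $w=uv\in D$ but $u^\sigma>u$, then $w^\sigma=u^\sigma v^\sigma>uv=w$. Prefix-closure forces unique factorization: suppose $d=u_1\cdots u_r=v_1\cdots v_s$ are two factorizations of $d\in D$ into concatenation-irreducible elements of $D$, and, say, $u_1$ is a proper prefix of $v_1$. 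The right endpoint of $v_1$ falls within the blocks $u_2,\dots,u_r$; if it lands on a block boundary then $v_1=u_1\cdots u_j$ for some $j\geq2$, and otherwise it lands strictly inside some $u_j=ab$ ($j\geq2$) with $v_1=u_1\cdots u_{j-1}\,a$. In either case $v_1$ is a product of two nonempty elements of $D$ (a concatenation of dominant words on the left; a prefix of the dominant word $u_j$ on the right), contradicting irreducibility of $v_1$. Hence $D=I^*$, where $I$ is its set of irreducibles, and every dominant word factors uniquely over $I$.

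Now set $W:=\spn{\, m_{[u]} : u\in I\,}$, a graded subspace of $R$; the elements $m_{[u]}$, $u\in I$, have distinct leading words (namely $u$), so they form a basis of $W$ and $\dim W=|I|$. I claim the canonical algebra homomorphism $T(W)\to R$ is an isomorphism. Its source has the tensors $m_{[u_1]}\otimes\cdots\otimes m_{[u_k]}$ ($k\geq0$, $u_i\in I$) as a basis, and such a tensor maps to $m_{[u_1]}\cdots m_{[u_k]}$, whose leading word is the concatenation $u_1\cdots u_k\in D$. By unique factorization in $D$ these leading words are pairwise distinct as the tuple varies, so the images are linearly independent and the map is injective; and since every dominant word is a product of irreducibles, in each degree $n$ we obtain $\dim R_n$ elements of $R_n$ with distinct leading words, hence a basis, so the map is onto. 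Therefore $R\cong T(W)$.

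Finally, $W$ is countably infinite dimensional: $|I|$ is at most countable, and for $d\geq2$ the words $x_d x_{d-1}^k$ ($k\geq0$) are all dominant — any $\sigma$ either strictly lowers the first letter or fixes $x_d$ and cannot raise $x_{d-1}$ — and irreducible, since a proper-prefix factorization would demand that a nonempty suffix $x_{d-1}^m$ be dominant, which it is not (apply the transposition $(d-1\ d)$). So $I$ is infinite, $W$ is a countably infinite-dimensional vector space, and $R=T(W)$ is the ring of free polynomials in infinitely many variables. The step I expect to be most delicate is the passage from prefix-closure of $D$ to unique factorization (together with the bookkeeping pinning down that leading words multiply as claimed and that $\mathrm{lw}(R)$ is exactly $D$); once that is secure, transporting freeness from $D$ to $R$ is routine. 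This argument is, in fact, a concrete instance of the general principle — developed in the body of the paper — that invariant rings of free polynomials are themselves free.
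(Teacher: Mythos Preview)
Your argument is correct. The paper does not prove Wolf's theorem directly; it recovers it as the special case $G=S_d$ of Theorem~\ref{mainresult}, whose algebraic content is that the ring of invariant free polynomials under any unitary action of a finite group is \emph{superorthogonal} and therefore free (Theorem~\ref{freeringortho} together with Lemma~\ref{sofree}). That route is Hilbert-space theoretic: one endows $\C\langle x_1,\dots,x_d\rangle$ with the $H^2_d$ inner product, builds homogeneous generators recursively so that the left modules $u_\lambda H^2_d$ are pairwise orthogonal, and reads off freeness from that orthogonality via the grading lemma. Your route is purely combinatorial and Gr\"obner-flavoured: you transport the problem to the free monoid via leading words, show that the set $D$ of dominant words is a prefix-closed submonoid (hence free on its irreducibles), and pull linear independence back through distinct leading terms. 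Your argument is more elementary and produces an explicit generating set (the orbit sums $m_{[u]}$ for $u$ irreducible dominant) with no analysis whatsoever, but it is tailored to a group acting by permuting the letters --- the ``dominant word'' technology needs monomials to be sent to monomials, which fails for a general unitary representation. The paper's superorthogonality approach works uniformly for all finite $G$ and, as a bonus, yields the norm-preserving isomorphism on the row ball; the price is the $H^2_d$ machinery.
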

More recently, free symmetric polynomials have been investigated by numerous authors \cite{dia12, brrz08, gel95}, and some deep connections with representation theory are now known.

We are concerned with the ring of invariant free polynomials.
Let $G$ be a finite group.  Let $\pi: G \to \mathcal U_d$ be a unitary representation.
That is,
$\pi$ is a homomorphism from the group $G$ to the group of
$d \times d$ unitary matrices over $\mathbb{C}.$  
An \dfn{invariant free polynomial with respect to $\pi$} is a free polynomial $p \in \C\langle x_1, \ldots, x_d\rangle$ which satisfies
	$$p(x_1,\ldots, x_d) = p(\pi(\sigma)(x_1,\ldots, x_d))$$
for all $\sigma$ in $G.$
(Here, the $d \times d$ matrices $\pi(\sigma)$ are acting on $(x_1,\ldots, x_d)$
by matrix multiplication. That is, for example, if
$$A = \bpm
1 & 2 \\
3 & 4
\epm,$$
we define
$A(x_1, x_2) = (x_1+2x_2, 3x_1 + 4x_2).$) Notably, the invariant free polynomials
form an algebra, which is called the \dfn{ring of invariant free polynomials.}

For example, the symmetric free polynomials in $d$ variables are obtained
in the special case where $G = S_d$ a symmetric group, and we define $\pi: S_d \to \mathcal U_d$ be the representation satisfying
	$$\pi(\sigma)e_i = e_{\sigma(i)},$$
where $e_i$ is the $i$-th elementary basis vector for $\mathbb{C}^d.$

We now define the domains on which we intend to execute the function theory
of invariant free polynomials.
Fix an infinite dimensional separable Hilbert space $\hilbert$. There is
only one infinite dimensional separable Hilbert space up to isomorphism \cite{con85},
so what
follows will be independent of the exact choice.
Let $\Lambda$ be an index set.
Let $\mathcal C^\Lambda$ be the set of $(X_\lambda)_{\lambda \in \Lambda},$
sequences of elements in $\mathcal{B}(\hilbert)$ indexed by $\Lambda,$ such that 
$$\sum_{\lambda \in \Lambda} X_\lambda X_\lambda\ad < 1,$$ 
where $A < B$ means that $B-A$ is strictly positive definite and $A \leq B$ means that
$B-A$ is positive semidefinite.
Here the sum $\sum_{\lambda \in \Lambda} X_\lambda X_\lambda\ad$ is required to be absolutely convergent in the weak operator topology and have $\sum_{\lambda \in \Lambda} X_\lambda X_\lambda\ad < 1,$ or equivalently, there is an $\varepsilon>0$ such that 
for every finite $\Lambda'\subset \Lambda,$
	$$\sum_{\lambda \in \Lambda'} X_\lambda X_\lambda\ad \leq 1-\varepsilon.$$
If $\Lambda = d$ is a natural number, we identify $d$ as a set with $d$ elements.
That is, $\mathcal C^d$ is the set of $d$-tuples of operators in $\mathcal{B}(\hilbert)$
such that
	$$\sum^d_{i=1} X_i X_i\ad < 1.$$
Some authors refer to $\mathcal C^\Lambda$ as the \dfn{row ball}, or the set of \dfn{row contractions} \cite{hkms09,po06}.
Given a free polynomial $p$
in the letters $(x_\lambda)_{\lambda \in \Lambda}$
and
a point $X = (X_\lambda)_{\lambda\in \Lambda} \in \mathcal C^\Lambda$,
we form $p(X)$ via the formula $$p(X) = p((X_\lambda)_{\lambda\in \Lambda}),$$
which when $\Lambda = d$ reduces to $p(X) = p(X_1, \ldots, X_d).$

Let $\mathcal R$ be a subalgebra of $\C\langle x_1, \ldots, x_d\rangle.$ We define a \dfn{basis}
for $\mathcal R$ to be an indexed sequence $(u_{\lambda})_{\lambda\in\Lambda}$ of free polynomials which generate $\mathcal R$ as an algebra, which is minimal in the sense that
there is no $\Lambda ' \subsetneq \Lambda$  such that the sequence $(u_{\lambda})_{\lambda\in\Lambda '}$ generate $\mathcal R$ as an algebra.
We note that any basis
for $\mathcal R$ is \emph{countable}, since $\mathcal R$ has countable dimension as a vector
space as it is a subspace of $\C\langle x_1, \ldots, x_d\rangle,$ and the minimality condition implies that the $(u_{\lambda})_{\lambda\in\Lambda}$
are linearly independent.

We prove the following theorem. 
\begin{theorem}\label{mainresultpolynomials} \label{mainresult}
Let $G$ be a finite group.
Let $\pi:G\rightarrow\mathcal{U}_{d}$ be a unitary representation. 
There exists a basis for the ring of invariant free polynomials 
$(u_{\lambda})_{\lambda\in\Lambda}$ such that the map $\Phi$ on $\mathcal{C}^d$
defined by the formula
$$\Phi(X)=(u_{\lambda}(X))_{\lambda\in\Lambda}$$ satisfies the following properties:
\begin{itemize}
\item
The map $\Phi$
takes $\mathcal{C}^{d}$ to $\mathcal{C}^{\Lambda}.$
\item
Furthermore, for $p$ in the ring of invariant free polynomials for $\pi$, there exists a 
unique free polynomial $\hat{p}$ such that $p=\hat{p}\circ\Phi.$
\item Moreover,
$$\sup_{X \in \mathcal{C}^{d}} \|p(X)\| =
	\sup_{U \in \mathcal{C}^{\Lambda}} \|\hat{p}(U)\|.$$
\end{itemize}
Namely, the map taking $\hat{p}$
to $p$ is an isomorphism of rings from the free algebra
$\mathbb{C}\langle x_{\lambda}\rangle_{\lambda \in \Lambda}$
to the ring of invariant free polynomials.
\end{theorem}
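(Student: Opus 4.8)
\emph{Setup and choice of basis.} The plan is to read the basis off the Fock space. Write $V=\mathbb{C}^{d}$ with its standard inner product and identify $\mathbb{C}\langle x_{1},\dots,x_{d}\rangle$ with the tensor algebra $T(V)=\bigoplus_{n\ge 0}V^{\otimes n}$, with the inner product that is multiplicative on simple tensors and makes the $V^{\otimes n}$ orthogonal; this is the full Fock space $\mathcal{F}_{d}$, with left creation operators $S_{1},\dots,S_{d}$, and $\sup_{X\in\mathcal{C}^{d}}\|q(X)\|=\|q(S)\|$ for every free polynomial $q$ (Popescu's noncommutative von Neumann inequality, using that the open row ball is dense in the closed one). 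The representation $\pi$ acts on $T(V)$ by the graded algebra automorphisms $g\cdot(x_{i_{1}}\cdots x_{i_{k}})=(\pi(g)x_{i_{1}})\cdots(\pi(g)x_{i_{k}})$, which are unitary on $\mathcal{F}_{d}$. Let $\mathcal{R}=T(V)^{G}$ be the ring of invariant free polynomials, a connected graded subalgebra with finite dimensional graded pieces $\mathcal{R}_{n}=(V^{\otimes n})^{G}$, and $\mathcal{R}_{+}=\bigoplus_{n\ge1}\mathcal{R}_{n}$. I would set $W:=\mathcal{R}_{+}\ominus\mathcal{R}_{+}^{2}$, the orthogonal complement (taken degree by degree, so $W=\bigoplus_{n}W_{n}$ with $W_{n}\subseteq V^{\otimes n}$) of $\mathcal{R}_{+}^{2}$ in $\mathcal{R}_{+}$, and let $(u_{\lambda})_{\lambda\in\Lambda}$ be an orthonormal basis of $W$ by homogeneous elements, $u_{\lambda}\in W_{d_{\lambda}}$ with $d_{\lambda}\ge1$. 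A graded Nakayama argument shows $(u_{\lambda})$ generates $\mathcal{R}$; linear independence (hence minimality) will follow below, so this is a basis in the sense of the paper and $\Phi(X)=(u_{\lambda}(X))_{\lambda}$ is defined.

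\emph{The Fock side: ring isomorphism and one inequality.} The first claim is that $\tilde{u}_{\lambda}:=u_{\lambda}(S)\big|_{\overline{\mathcal{R}}}$ is a row isometry on $\mathcal{F}_{d}^{G}=\overline{\mathcal{R}}$, i.e.\ $\langle u_{\lambda}\xi,u_{\mu}\eta\rangle=\delta_{\lambda\mu}\langle\xi,\eta\rangle$ for $\xi,\eta\in\mathcal{R}$. Reducing to homogeneous $\xi,\eta$ and matching degrees, the case $d_{\lambda}=d_{\mu}$ is immediate from multiplicativity of the inner product and orthonormality; for $d_{\lambda}<d_{\mu}$ one rewrites $\langle u_{\lambda}\xi,u_{\mu}\eta\rangle=\langle u_{\lambda}\otimes\zeta,u_{\mu}\rangle$, where $\zeta\in V^{\otimes(d_{\mu}-d_{\lambda})}$ is obtained from $\xi$ by pairing its last slots against $\eta$, and a short unitarity computation (using that $\eta$ is $G$-invariant) shows $\zeta$ is again $G$-invariant, so $u_{\lambda}\otimes\zeta\in\mathcal{R}_{d_{\lambda}}\mathcal{R}_{d_{\mu}-d_{\lambda}}\subseteq\mathcal{R}_{+}^{2}$ is orthogonal to $u_{\mu}\in W$ and the pairing vanishes. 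Since $(u_{\lambda})$ generates $\mathcal{R}$, the vacuum $\Omega$ is cyclic for $(\tilde{u}_{\lambda})$, so the monomials $u_{w}:=\tilde{u}_{w}\Omega$ (over all words $w$) form an orthonormal basis of $\overline{\mathcal{R}}$; in particular the $u_{w}$ are linearly independent, so $\mathcal{R}$ is free on $(u_{\lambda})$. This yields the ring isomorphism $\mathbb{C}\langle x_{\lambda}\rangle_{\lambda\in\Lambda}\xrightarrow{\ \sim\ }\mathcal{R}$, $\hat{p}\mapsto p$, together with the existence and uniqueness of $\hat{p}$ with $p=\hat{p}\circ\Phi$ (uniqueness because a nonzero free polynomial does not vanish identically on $\mathcal{C}^{d}$). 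Matching the orthonormal bases $\{u_{w}\}$ and $\{S'_{w}\Omega\}$ gives a unitary $\mathcal{F}_{\Lambda}\to\overline{\mathcal{R}}$ intertwining the creation operators $S'_{\lambda}$ on $\mathcal{F}_{\Lambda}$ with $\tilde{u}_{\lambda}$; hence for invariant $p=\hat{p}(u)$ the operator $p(S)|_{\overline{\mathcal{R}}}=\hat{p}((\tilde{u}_{\lambda})_{\lambda})$ is unitarily equivalent to $\hat{p}((S'_{\lambda})_{\lambda})$, and Popescu's inequality on $\mathcal{C}^{\Lambda}$ gives
\[
\sup_{U\in\mathcal{C}^{\Lambda}}\|\hat{p}(U)\|=\|\hat{p}(S')\|=\bigl\|p(S)|_{\overline{\mathcal{R}}}\bigr\|\le\|p(S)\|=\sup_{X\in\mathcal{C}^{d}}\|p(X)\|.
\]

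\emph{The row-ball side: $\Phi(\mathcal{C}^{d})\subseteq\mathcal{C}^{\Lambda}$ and the reverse inequality.} Since each $u_{\lambda}$ is homogeneous of unit norm, $u_{\lambda}(S)$ is an isometry of all of $\mathcal{F}_{d}$ and $u_{\lambda}(S)u_{\lambda}(S)^{*}$ is the projection onto $u_{\lambda}\otimes\mathcal{F}_{d}$; restricted to $V^{\otimes n}$, the operator $F(S):=\sum_{\lambda}u_{\lambda}(S)u_{\lambda}(S)^{*}$ becomes $\sum_{m=1}^{n}P_{W_{m}}\otimes I$ (with $P_{W_{m}}$ acting on the first $m$ tensor slots), so the crucial point is that for $m<m'$ one has $W_{m'}\perp W_{m}\otimes V^{\otimes(m'-m)}$ inside $V^{\otimes m'}$. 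Decomposing $V^{\otimes(m'-m)}=\mathcal{R}_{m'-m}\oplus\mathcal{R}_{m'-m}^{\perp}$ into the trivial $G$-isotypic component and the rest: on $W_{m}\otimes\mathcal{R}_{m'-m}\subseteq\mathcal{R}_{m}\mathcal{R}_{m'-m}\subseteq\mathcal{R}_{+}^{2}$ orthogonality to $W_{m'}$ holds by construction, while $W_{m}\otimes\mathcal{R}_{m'-m}^{\perp}$ contains no trivial $G$-subrepresentation ($G$ acts trivially on $W_{m}\subseteq(V^{\otimes m})^{G}$ and nontrivially on every piece of $\mathcal{R}_{m'-m}^{\perp}$), hence lies in the sum of the nontrivial isotypic components of $V^{\otimes m'}$, which is orthogonal to $\mathcal{R}_{m'}\supseteq W_{m'}$. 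Therefore $F(S)\le I$. For a strict row contraction $X\in\mathcal{C}^{d}$, Popescu's Poisson kernel is an isometry $K_{X}\colon\mathcal{H}\to\mathcal{F}_{d}\otimes\mathcal{D}_{X}$ with $K_{X}X_{i}^{*}=(S_{i}^{*}\otimes I)K_{X}$, whence $X_{w}X_{w'}^{*}=K_{X}^{*}(S_{w}S_{w'}^{*}\otimes I)K_{X}$ for all words and $F(X)=K_{X}^{*}(F(S)\otimes I)K_{X}\le K_{X}^{*}K_{X}=I$. Finally, since $X$ lies in the \emph{open} row ball, $tX$ is still a row contraction for some $t>1$, and homogeneity gives $F(tX)=\sum_{\lambda}t^{2d_{\lambda}}u_{\lambda}(X)u_{\lambda}(X)^{*}\ge t^{2}F(X)$, so $F(X)\le t^{-2}I<I$; hence $\Phi(X)\in\mathcal{C}^{\Lambda}$. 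Then $p(X)=\hat{p}(\Phi(X))$ with $\Phi(X)\in\mathcal{C}^{\Lambda}$ gives $\sup_{\mathcal{C}^{d}}\|p\|\le\sup_{\mathcal{C}^{\Lambda}}\|\hat{p}\|$, completing the norm identity.

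\emph{Main obstacle.} The genuinely hard step is that $\Phi$ takes the row ball into the row ball, i.e.\ $F(S)\le I$ — equivalently, that the ranges of the isometries $u_{\lambda}(S)$ on $\mathcal{F}_{d}$ are mutually orthogonal. This is precisely where the choice of $W$ as the Fock-orthocomplement of $\mathcal{R}_{+}^{2}$ and the $G$-isotypic decomposition of the tensor powers have to be used together; a term-by-term bound on $\sum_{\lambda}u_{\lambda}(S)u_{\lambda}(S)^{*}$ is far too lossy. By contrast, the row-isometry statement on the invariant subspace is comparatively soft, using only $W\perp\mathcal{R}_{+}^{2}$ and invariance of partial pairings.
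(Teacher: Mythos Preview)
Your argument is correct, and at the level of architecture it matches the paper's: both work in the full Fock space $H^2_d$, produce a homogeneous orthonormal family $(u_\lambda)$ whose left-shift ranges $u_\lambda H^2_d$ are mutually orthogonal (what the paper calls a \emph{superorthogonal basis}), deduce freeness and the norm identity from this, and use creation-operator/dilation machinery. But several of your implementations differ from the paper's in useful ways.

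\textbf{Choice of basis.} The paper builds the generators recursively: $V_1=[H^2_d]_1$, $\tilde V_i=(V_i)^G$, $V_{i+1}=(\tilde V_i^{\perp}\cap V_i)\otimes[H^2_d]_1$; your $W=\mathcal R_+\ominus\mathcal R_+^2$ is a clean global description which (one can check degree by degree) coincides with $\bigoplus_i\tilde V_i$. Your graded Nakayama step replaces the paper's inductive ``$p=\sum u_{i,k}q_{i,k}+r$'' argument.

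\textbf{$\Phi(\mathcal C^d)\subseteq\mathcal C^\Lambda$.} The paper proves this by an explicit degree-by-degree induction (its Lemma on the ``basis map''), controlling $\sum u_\lambda(X)u_\lambda(X)^*$ together with a complementary sum $\sum w_{i,n}(X)w_{i,n}(X)^*$. You instead prove the range orthogonality $W_{m'}\perp W_m\otimes V^{\otimes(m'-m)}$ directly via the $G$-isotypic decomposition, conclude $F(S)\le I$, and transfer to general $X$ with the Popescu Poisson kernel plus a homogeneity/dilation trick for strictness. This is arguably more conceptual, and isolates exactly why superorthogonality holds for rings of invariants.

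\textbf{Norm equality.} Both arguments factor through the restriction $u_\lambda(S)\big|_{\overline{\mathcal R}}$. The paper writes $M_{u_\lambda}$ in $2\times2$ block form with $(1,1)$ block $M_{x_\lambda}$ and then invokes Frazho--Popescu dilation to realize $\hat p(U)$ as a compression of $p(M_x\otimes I)$. You identify $(\tilde u_\lambda)$ as a row isometry with cyclic vacuum, hence unitarily equivalent to the creation tuple on $\mathcal F_\Lambda$; this yields the same inequality without the explicit block bookkeeping.

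In short: same skeleton, different flesh. Your $W=\mathcal R_+\ominus\mathcal R_+^2$ description and your isotypic proof of range orthogonality are worth keeping as alternative formulations.
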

Theorem \ref{mainresult} follows from Theorems \ref{mainresultso} and \ref{freeringortho}.
We note that a similar result was obtained for free functions on the domain
	$$\mathcal B^2 = \{ (X_1, X_2) \in \mathcal{B}(\hilbert) | \norm{X_i} < 1\}$$
for symmetric free functions in two variables by Agler and Young \cite{ay14}.

In general, the basis in Theorem \ref{mainresult} is hard to compute. However, the number of elements of a certain degree is computed in Section \ref{counting}. An explicit basis can be obtained if $G$ is abelian, which we give in Section \ref{abelian}.

\subsection{Some examples}

We now give some concrete examples of what our main result says.
First, we give an analogous theorem to that obtained in Agler and Young 
for symmetric free functions in two variables \cite{ay14}.
\begin{proposition}\label{sym2propexam}
Let
$$\Phi(X_1,X_2)=(A,B^2,BAB,\ldots, BA^nB,\ldots)$$
where $$A = \frac{X_1 + X_2}{\sqrt 2},
	B = \frac{X_1 - X_2}{\sqrt 2}.$$
The map $\Phi$ satisfies the following properties:
\begin{itemize}
	\item $\Phi$ takes $\mathcal C^2$ to $\mathcal C^\mathbb{N}.$
	\item For any free polynomial $p$ such that
		$$p(X_1,X_2) = p(X_2,X_1),$$
	there exists a
	unique free polynomial $\hat{p}$ such that $p=\hat{p}\circ\Phi.$
	\item Moreover,
		$$\sup_{X \in \mathcal{C}^{2}} \|p(X)\| =
	\sup_{U \in \mathcal{C}^{\mathbb{N}}} \|\hat{p}(U)\|.$$
\end{itemize}
\end{proposition}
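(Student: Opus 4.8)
The plan is to recognize Proposition~\ref{sym2propexam} as the case $G=S_2$, with $\pi$ the permutation representation, of Theorem~\ref{mainresult}, and to verify by hand that the basis it produces is the explicit list $A,B^2,BAB,\dots$. The organizing observation is that $(X_1,X_2)\mapsto(A,B)$ is an orthogonal, hence unitary, change of variables on $\mathbb{C}^2$: since $AA\ad+BB\ad=\tfrac12(X_1+X_2)(X_1+X_2)\ad+\tfrac12(X_1-X_2)(X_1-X_2)\ad=X_1X_1\ad+X_2X_2\ad$, it carries $\mathcal{C}^2$ bijectively onto $\mathcal{C}^2$, and it conjugates the flip $(X_1,X_2)\mapsto(X_2,X_1)$ to $(A,B)\mapsto(A,-B)$ --- which is precisely the diagonalization of $\pi$ into its trivial summand (the $A$ direction) and its sign summand (the $B$ direction). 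Consequently a free polynomial $p$ is symmetric exactly when $q(A,B):=p(X_1,X_2)$ is \emph{even in $B$}, i.e.\ every monomial of $q$ has an even number of $B$'s.

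For the second bullet and the ring isomorphism I would argue purely algebraically that the even-in-$B$ polynomials in $A,B$ are exactly the polynomials in the letters $A,B^2,BAB,BA^2B,\dots$. Surjectivity: any even-in-$B$ monomial can be written $A^{a_0}(BA^{a_1}B)A^{a_2}(BA^{a_3}B)\cdots A^{a_{2k}}$, a product of $A$'s and blocks $BA^nB$. Injectivity (hence uniqueness of $\hat p$): a word in $A,B$ lying in this subalgebra has a \emph{unique} such factorization, read off by writing it canonically as $A^{a_0}BA^{a_1}B\cdots BA^{a_{2k}}$ and pairing the $B$'s from left to right, so distinct monomials in the variables $x_0\leftrightarrow A$, $x_{n+1}\leftrightarrow BA^nB$ map to distinct words. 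Thus $x_0\mapsto A$, $x_{n+1}\mapsto BA^nB$ is a ring isomorphism from $\mathbb{C}\langle x_\lambda\rangle_{\lambda\in\mathbb{N}}$ onto the symmetric free polynomials, these generators are algebraically independent (so the list is a basis in the paper's sense), and composing with the change of variables yields the existence and uniqueness of $\hat p$ with $p=\hat p\circ\Phi$.

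For the first bullet, fix $X\in\mathcal{C}^2$, so $AA\ad+BB\ad\le 1-\eps$ for some $\eps>0$, whence $\|A\|^2\le 1-\eps<1$ and $BB\ad\le(1-AA\ad)-\eps$. A finite initial segment of $\Phi(X)$ gives $AA\ad+\sum_{n=0}^{M-1}(BA^nB)(BA^nB)\ad=AA\ad+BP_MB\ad$ with $P_M=\sum_{n=0}^{M-1}A^n(BB\ad)A^{\ast n}\ge 0$. Bounding $BB\ad$ and telescoping, $P_M\le\sum_{n=0}^{M-1}A^n(1-AA\ad)A^{\ast n}-\eps\sum_{n=0}^{M-1}A^nA^{\ast n}=(1-A^MA^{\ast M})-\eps\sum_{n=0}^{M-1}A^nA^{\ast n}\le 1-\eps$, since $1-A^MA^{\ast M}\le 1$ and $\sum_{n=0}^{M-1}A^nA^{\ast n}\ge A^0A^{\ast 0}=1$. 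Hence $BP_MB\ad\le(1-\eps)BB\ad$, so the finite sum is $\le AA\ad+(1-\eps)BB\ad\le AA\ad+BB\ad\le 1-\eps$, uniformly in $M$; this is exactly $\Phi(X)\in\mathcal{C}^{\mathbb{N}}$.

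Finally, the norm identity. The inequality $\le$ is immediate: for $X\in\mathcal{C}^2$ we have $p(X)=\hat p(\Phi(X))$ with $\Phi(X)\in\mathcal{C}^{\mathbb{N}}$, so $\|p(X)\|\le\sup_{U\in\mathcal{C}^{\mathbb{N}}}\|\hat p(U)\|$, and one takes the supremum over $X$. The reverse inequality is the substantive point; it is the norm-preservation assertion of Theorem~\ref{mainresult} specialized to the permutation representation of $S_2$, and once one checks that the basis furnished by Theorems~\ref{mainresultso} and~\ref{freeringortho} for that representation is, up to reindexing, the list $A,B^2,BAB,\dots$, it follows from those results. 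I expect this to be the main obstacle for a self-contained argument: $\Phi$ is far from surjective onto $\mathcal{C}^{\mathbb{N}}$ (nothing forces $B^2,BAB,\dots$ to be simultaneously of the form $\Phi(X)$), so one cannot merely pull points back, and the real content is to show that $\Phi(\mathcal{C}^2)$ together with the free functional calculus is still a norming set for $\mathbb{C}\langle x_\lambda\rangle$ --- exactly what the general structure theory supplies.
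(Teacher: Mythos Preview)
Your approach is correct and matches the paper's: the paper also derives Proposition~\ref{sym2propexam} as the $G=S_2$ instance of the general theory, specifically via Theorem~\ref{abelianbasistheorem} (the abelian case), which produces exactly the list $A,\,BA^nB$ once one diagonalizes the permutation representation into the trivial and sign characters.

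Two comparative remarks. First, the paper does not carry out your direct telescoping estimate for $\Phi(\mathcal C^2)\subset\mathcal C^{\mathbb N}$ or your bare-hands unique factorization argument; it simply invokes Theorem~\ref{abelianbasistheorem} for the basis and Theorem~\ref{mainresultso} for all three bullets simultaneously. Your direct arguments are a pleasant bonus and entirely valid. Second, in your final paragraph you phrase the remaining step as ``checking that the basis furnished by Theorems~\ref{mainresultso} and~\ref{freeringortho} is, up to reindexing, the list $A,B^2,BAB,\dots$''. This is slightly awkward, since the construction in Theorem~\ref{freeringortho} is not canonical. It is cleaner either to cite Theorem~\ref{abelianbasistheorem} (which hands you this exact basis), or to verify directly that $\{A\}\cup\{BA^nB:n\ge0\}$ is superorthonormal: each is homogeneous of norm~$1$ by the grading lemma, and $u_\lambda H^2_2\perp u_\mu H^2_2$ because any word beginning $A\cdots$ is orthogonal to any word beginning $B\cdots$, while $BA^nB\cdots$ and $BA^mB\cdots$ with $n<m$ differ in the $(n+2)$-th letter. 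With that in hand, Theorem~\ref{mainresultso} applies verbatim and gives the norm equality without further checking.
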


Another simple example concerns the ring of even functions in two variables, that is, the ring of free polynomials in two variables $f$ satisfying the 
identity 
	$$f(X_1,X_2)= f(-X_1,-X_2).$$
Here the group $G$ in Theorem \ref{mainresult} is the cyclic group with two
elements, $\mathbb{Z}_2 = \{0,1\},$ and the representation $\pi$ is given by
	$$\pi(0) = \bpm 1 & 0 \\ 0 & 1 \epm, \pi(1) = \bpm -1 & 0 \\ 0 & -1 \epm.$$
\begin{proposition}\label{evenpropexam}
Let
$$\Phi(X_1,X_2)=(X_1X_1,X_1X_2,X_2X_1,X_2X_2).$$
The map $\Phi$ satisfies the following properties:
\begin{itemize}
	\item $\Phi$ takes $\mathcal C^2$ to $\mathcal C^4.$
	\item For any free polynomial $p$ such that
		$$p(X_1,X_2) = p(-X_1,-X_2),$$
	there exists a
	unique free polynomial $\hat{p}$ such that $p=\hat{p}\circ\Phi.$
	\item Moreover,
		$$\sup_{X \in \mathcal{C}^{2}} \|p(X)\| =
	\sup_{U \in \mathcal{C}^{4}} \|\hat{p}(U)\|.$$
\end{itemize}
\end{proposition}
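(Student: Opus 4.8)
The plan is to view this as the instance of the general theory in which $G=\mathbb Z_2$ and $\pi(1)=-I_2$, so that the ring of invariant free polynomials is the subalgebra $E\subseteq\mathbb C\langle x_1,x_2\rangle$ spanned by the monomials of even length, and the substance of the proposition is that the four degree-two monomials occurring in $\Phi$ form a basis of $E$ of the sort produced by Theorem \ref{mainresult}. The first bullet, $\Phi(\mathcal C^2)\subseteq\mathcal C^4$, is a one-line estimate: if $X_1X_1\ad+X_2X_2\ad\le 1-\varepsilon$ with $0<\varepsilon<1$, then using that $0\le S\le T$ implies $X_iSX_i\ad\le X_iTX_i\ad$,
\[
\sum_{i,j=1}^{2}(X_iX_j)(X_iX_j)\ad=\sum_{i=1}^{2}X_i\bigl(X_1X_1\ad+X_2X_2\ad\bigr)X_i\ad\le(1-\varepsilon)\sum_{i=1}^{2}X_iX_i\ad\le(1-\varepsilon)^{2}<1,
\]
and since the tuple $\Phi(X)$ is finite there is no convergence issue.

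For the algebraic content I would use a bijection of words. Sending $z_1\mapsto x_1x_1$, $z_2\mapsto x_1x_2$, $z_3\mapsto x_2x_1$, $z_4\mapsto x_2x_2$ and extending multiplicatively defines a unital algebra homomorphism $\mathbb C\langle z_1,\dots,z_4\rangle\to\mathbb C\langle x_1,x_2\rangle$, and at the level of evaluations this homomorphism is exactly $\hat p\mapsto\hat p\circ\Phi$. A word of length $n$ in the $z$'s is carried to a word of length $2n$ in the $x$'s, and reading off consecutive pairs of letters recovers the $z$-word, so the map on words is a bijection from words of length $n$ in four letters onto words of length $2n$ in two letters (consistent with $4^{n}=2^{2n}$). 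Hence the homomorphism is injective and its image is precisely the span of the even-length monomials, i.e.\ $E$. This gives at once the existence and uniqueness of the free polynomial $\hat p$ with $p=\hat p\circ\Phi$, the fact that $\hat p\mapsto p$ is a ring isomorphism onto $E$, and the fact that $(x_1x_1,x_1x_2,x_2x_1,x_2x_2)$ is a basis of $E$ in the paper's sense: it generates, and it is minimal because the degree-two part of $E$ is four-dimensional, so no three of these monomials generate.

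The norm identity is the delicate point, and the one where I expect the real work. One inequality is immediate from $\Phi(\mathcal C^2)\subseteq\mathcal C^4$ and $p=\hat p\circ\Phi$, namely $\sup_{X\in\mathcal C^2}\norm{p(X)}\le\sup_{U\in\mathcal C^4}\norm{\hat p(U)}$. For the reverse I would pass to the full Fock spaces $\mathcal F(\mathbb C^2)=\bigoplus_{n\ge 0}(\mathbb C^2)^{\otimes n}$ and $\mathcal F(\mathbb C^4)$: by the noncommutative von Neumann inequality for the row ball (see \cite{po06}), $\sup_{X\in\mathcal C^2}\norm{p(X)}=\norm{p(L_1,L_2)}$ with $L_1,L_2$ the left creation operators on $\mathcal F(\mathbb C^2)$, and $\sup_{U\in\mathcal C^4}\norm{\hat p(U)}=\norm{\hat p(R_1,\dots,R_4)}$ with $R_1,\dots,R_4$ the left creation operators on $\mathcal F(\mathbb C^4)$. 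The products $L_iL_j$, and their adjoints, preserve the parity grading of $\mathcal F(\mathbb C^2)$, so the even part $\bigoplus_{n}(\mathbb C^2)^{\otimes 2n}$ is an invariant subspace for the algebra they generate; regrouping tensor factors two at a time identifies this subspace unitarily with $\mathcal F(\mathbb C^4)$, carrying $L_iL_j$ to $R_{(i,j)}$ under $(1,1)\mapsto1$, $(1,2)\mapsto2$, $(2,1)\mapsto3$, $(2,2)\mapsto4$. Consequently $\hat p(R_1,\dots,R_4)$ is unitarily equivalent to the restriction of $\hat p(\Phi(L_1,L_2))=p(L_1,L_2)$ to an invariant subspace, so $\norm{\hat p(R)}\le\norm{p(L_1,L_2)}$ and the two suprema agree. (Alternatively, one can deduce the whole proposition from the abelian case of Theorem \ref{mainresult} in Section \ref{abelian}, once one checks that the explicit basis constructed there specializes to this $\Phi$.) The main obstacle is precisely organizing this last step cleanly: the underlying algebra is elementary, but identifying the row-ball norm with the norm of the creation operators and matching the parity-graded regrouping to the specific tuple $\Phi$ is where the care is needed.
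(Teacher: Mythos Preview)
Your argument is correct. The first bullet and the easy direction of the norm equality are exactly what the paper does in Section~\ref{evensection}, and your word-bijection proof of existence and uniqueness of $\hat p$ is a clean elementary substitute for the paper's route through superorthogonality and Theorem~\ref{abelianbasistheorem}.

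The genuine difference is in the hard direction $\sup_{\mathcal C^4}\|\hat p\|\le\sup_{\mathcal C^2}\|p\|$. The paper's Section~\ref{evensection} builds, for each $U\in\mathcal C^4$, explicit $3\times 3$ block operators $X_1,X_2\in\overline{\mathcal C^2}$ so that $\hat p(U)$ sits as a direct summand of $p(X)$; the general proof in Lemma~\ref{ball} instead works on $H^2_d$ with the shift tuple $M_x$ and invokes the Frazho--Popescu dilation to realize an arbitrary $U$ as a compression of $(M_{u_\lambda}\otimes I)$. Your approach is a third variant: you go straight to the Fock models, use Popescu's von Neumann inequality to replace both suprema by $\|p(L)\|$ and $\|\hat p(R)\|$, and then observe that regrouping tensors in pairs gives a unitary from the even-degree part of $\mathcal F(\mathbb C^2)$ onto $\mathcal F(\mathbb C^4)$ intertwining $L_iL_j$ with $R_{(i,j)}$, so $\hat p(R)$ is the restriction of $p(L)$ to a reducing subspace. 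This is slicker than either of the paper's arguments for this particular $\Phi$, because the regrouping unitary is available precisely when the basis consists of all degree-two monomials; it would not directly handle, say, the symmetric case of Proposition~\ref{sym2propexam}, where the paper's dilation-based Lemma~\ref{ball} is what carries the day. So your shortcut buys elegance here at the cost of generality, while the paper's block-matrix construction in Section~\ref{evensection} is more hands-on but makes the mechanism visible without any Fock-space machinery.
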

We discuss Proposition \ref{evenpropexam} in detail in Section \ref{evensection}.
Here we see a concrete trade-off between the number of variables and degree in the
optimization of a free polynomial: finding the maximum norm of a polynomial in $4$ variables
of degree $d$ on $\mathcal C^4$
is the same as finding the norm of an even polynomial in $2$ variables of degree $2d$
on $\mathcal C^2.$

Both Proposition \ref{sym2propexam} and Proposition \ref{evenpropexam}
follow from our basis construction in the abelian case
given in Theorem \ref{abelianbasistheorem}.

\subsection{Geometry}
Although the image of the map $\Phi$ in Theorem \ref{mainresult} may have high codimension, in the sense that it is far from being literally surjective, the function $\hat{f}$ is completely determined by $f$ and has the same norm. We view this as an analogue of the celebrated work of Agler and {\mc}Carthy on norm preserving extensions of functions on varieties to whole domains \cite{agmcdv, agmcvn}. Additionally, since $f$ totally determines $\hat{f}$, the dimension of the Zariski closure of the image of a free polynomial map can apparently go up, in contrast with the commutative case \cite{hart97}.
Exploiting the aforementioned phenomenon is a critical step in the theory of
change of variables for free polynomials and their generalizations, the free functions, which had been thought to be extremely
rigid \cite{helkm11}.

To prove Theorem \ref{mainresult}, we develop a geometric theory of the ring of invariant free functions. As a consequence of the geometric structure, the ring of free invariant polynomials is always free (but perhaps infinitely generated), in contrast with the Chevalley-Shepard-Todd Theorem in the commutative case, in which freeness depends on the structure of $G$ \cite{chev55,st54}.

\subsection{Free analysis}
A \dfn{free function on $\mathcal{C}^{\Lambda}$}
is a function from $\mathcal{C}^{\Lambda}$ to $\mathcal{B}(\mathcal H)$
which is the uniform limit of free polynomials on the sets $r\cc{\mathcal{C}^{\Lambda}}.$
We let $H(\mathcal{C}^{\Lambda})$ denote the \dfn{algebra of
free functions on $\mathcal C^\Lambda$}.
The \dfn{Banach algebra of bounded free functions} $H^\infty(\mathcal{C}^{\Lambda}) \subset H(\mathcal{C}^{\Lambda})$ is
the space of all bounded functions in $H(\mathcal{C}^{\Lambda})$
equipped with the norm
	$$\|p\| = \sup_{X \in \mathcal{C}^{\Lambda}} \|p(X)\|.$$
We note that there are many equivalent characterizations of a free function, such as \cite{agmc_gh, hkms09,  vvw12,po06}.

We can reinterpret Theorem \ref{mainresult} as an isomorphism of function algebras analogous to Wolf's theorem.
Let $H^\infty_{\pi}(\mathcal C^d)$ be the Banach algebra of bounded invariant free functions for $\pi$ on $\mathcal C^d$.
\begin{corollary}\label{corollaryIso}
There is a countable set $\Lambda$ such that
	$$H^\infty_\pi(\mathcal C^d) \cong H^\infty(\mathcal C^\Lambda)$$
as Banach algebras.
\end{corollary}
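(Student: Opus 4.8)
The plan is to prove the stronger assertion that the map $\Phi$ of Theorem~\ref{mainresult} induces the isomorphism by composition, i.e.\ that
\[
T\colon H^{\infty}(\mathcal{C}^{\Lambda})\longrightarrow H^{\infty}_{\pi}(\mathcal{C}^{d}),\qquad T\hat f=\hat f\circ\Phi,
\]
is an isometric unital algebra isomorphism, where $(u_{\lambda})_{\lambda\in\Lambda}$ and $\Phi(X)=(u_{\lambda}(X))_{\lambda\in\Lambda}$ are as in that theorem and $\Lambda$ is the (countable) index set produced there. First I would dispose of the formal properties: $T$ is linear, unital, and multiplicative because composition of functions is computed pointwise, and it is contractive because $\Phi$ takes $\mathcal{C}^{d}$ into $\mathcal{C}^{\Lambda}$, so $T\hat f$ is bounded. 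The points needing care are that $T\hat f$ is again a \emph{free} function on $\mathcal{C}^{d}$ and is $\pi$-invariant; invariance is immediate from $u_{\lambda}\circ\pi(\sigma)=u_{\lambda}$, while for the free-function property I would record the quantitative fact that $\Phi$ carries each dilate back inside the row ball, indeed $\Phi(r\overline{\mathcal{C}^{d}})\subseteq r\overline{\mathcal{C}^{\Lambda}}$. This uses that the basis $(u_{\lambda})$ may be taken homogeneous, of degrees $d_{\lambda}\ge 1$ (which the construction behind Theorem~\ref{mainresult} provides): then $\sum_{\lambda}u_{\lambda}(rX)u_{\lambda}(rX)\ad=\sum_{\lambda}r^{2d_{\lambda}}u_{\lambda}(X)u_{\lambda}(X)\ad\le r^{2}\sum_{\lambda}u_{\lambda}(X)u_{\lambda}(X)\ad\le r^{2}$ for $X\in\overline{\mathcal{C}^{d}}$, the last inequality by passing to the limit from inside $\mathcal{C}^{d}$. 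Hence free polynomials approximating $\hat f$ uniformly on $r\overline{\mathcal{C}^{\Lambda}}$ pull back under $\Phi$ to free polynomials approximating $T\hat f$ uniformly on $r\overline{\mathcal{C}^{d}}$.

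The substance is to promote the norm identity of Theorem~\ref{mainresult}, which is stated only for polynomials, to all of $H^{\infty}$, and I would do this using weighted dilations. For $0<s<1$ set $\delta_{s}(U)=(s^{d_{\lambda}}U_{\lambda})_{\lambda}$; since $d_{\lambda}\ge 1$ this shrinks $t\overline{\mathcal{C}^{\Lambda}}$ into $st\,\overline{\mathcal{C}^{\Lambda}}$ for every $t>0$, and, crucially, homogeneity of the $u_{\lambda}$ gives the exact intertwining $\delta_{s}\circ\Phi=\Phi\circ(s\,\cdot\,)$. To see $T$ is isometric it suffices to show $\|\hat g\|_{\mathcal{C}^{\Lambda}}\le\|T\hat g\|_{\mathcal{C}^{d}}$: the function $\hat g^{(s)}:=\hat g\circ\delta_{s}$ is a bounded free function on a ball strictly containing $\overline{\mathcal{C}^{\Lambda}}$, hence---by the very definition of a free function---a uniform limit on $\overline{\mathcal{C}^{\Lambda}}\supseteq\Phi(\mathcal{C}^{d})$ of free polynomials $q_{n}$; applying Theorem~\ref{mainresult} to each $q_{n}$ and letting $n\to\infty$ gives $\|\hat g^{(s)}\|_{\mathcal{C}^{\Lambda}}=\|\hat g^{(s)}\circ\Phi\|_{\mathcal{C}^{d}}$, and since $\hat g^{(s)}\circ\Phi=(T\hat g)(s\,\cdot\,)$ this is at most $\|T\hat g\|_{\mathcal{C}^{d}}$. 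Letting $s\to 1^{-}$ and using $\delta_{s}U\to U$ for each $U\in\mathcal{C}^{\Lambda}$ yields $\|\hat g\|_{\mathcal{C}^{\Lambda}}\le\|T\hat g\|_{\mathcal{C}^{d}}$, so $T$ is isometric and in particular injective.

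For surjectivity I would transfer in the opposite direction. Given $f\in H^{\infty}_{\pi}(\mathcal{C}^{d})$ with $\|f\|=M$, the dilate $f(s\,\cdot\,)$ is a bounded free function on a ball larger than $\mathcal{C}^{d}$, hence a uniform limit on $\overline{\mathcal{C}^{d}}$ of free polynomials, which I would average over $G$ (legitimate since each $\pi(\sigma)$ is unitary and therefore preserves $\overline{\mathcal{C}^{d}}$) to obtain \emph{invariant} free polynomials $p_{n}\to f(s\,\cdot\,)$. Their unique preimages under the polynomial isomorphism of Theorem~\ref{mainresult} have norm at most $M$ and form a Cauchy sequence in $H^{\infty}(\mathcal{C}^{\Lambda})$---Cauchy because $T$ is isometric on polynomials---so they converge to some $g_{s}$ with $\|g_{s}\|\le M$ and $Tg_{s}=f(s\,\cdot\,)$. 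As $s\to 1^{-}$, $f(s\,\cdot\,)\to f$ locally uniformly; since $\{g_{s}\}$ is uniformly bounded, a normal-families (Montel-type) argument for bounded free functions on $\mathcal{C}^{\Lambda}$ extracts a subnet converging locally uniformly to some $\hat f\in H^{\infty}(\mathcal{C}^{\Lambda})$, and then $T\hat f=\hat f\circ\Phi=\lim g_{s}\circ\Phi=\lim f(s\,\cdot\,)=f$. Together with isometry and the homomorphism properties, this exhibits $T$ as the desired Banach algebra isomorphism.

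The step I expect to be the genuine obstacle is precisely this limiting passage from polynomials to $H^{\infty}$: Theorem~\ref{mainresult} controls only polynomials, whereas $H^{\infty}$ is far larger than the uniform closure of the polynomials, so one cannot simply ``take closures.'' The weighted dilations $\delta_{s}$ (for the isometry) and the Montel-type compactness of bounded free functions (for surjectivity) are what make the limits legitimate, and both of these---as well as the well-definedness of $T$---rest on the containment $\Phi(r\overline{\mathcal{C}^{d}})\subseteq r\overline{\mathcal{C}^{\Lambda}}$ and on the homogeneity of the basis $(u_{\lambda})$, the one structural input beyond the bare statement of Theorem~\ref{mainresult} that needs to be carried along from its proof.
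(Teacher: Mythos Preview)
Your argument is correct and uses the same map as the paper, namely $T\hat f=\hat f\circ\Phi$, but the justifications diverge sharply in length and style. The paper's entire proof is two sentences: it declares that $T$ is bijective because the polynomial isomorphism of Theorem~\ref{mainresult} already gives a bijection on the level of formal power series, and free functions are uniquely determined by their power series (citing \cite{vvw12}); the isometry is left implicit in the word ``directly.'' Your proof, by contrast, supplies the analysis the paper omits: the containment $\Phi(r\overline{\mathcal C^d})\subseteq r\overline{\mathcal C^\Lambda}$ via homogeneity of the $u_\lambda$, the weighted dilation $\delta_s$ and the intertwining $\delta_s\circ\Phi=\Phi\circ(s\,\cdot\,)$ to push the polynomial isometry up to $H^\infty$, and a normal-families step for surjectivity.

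What each buys: the paper's route is short but leans on the black box that a formal power series arising as the preimage of a bounded invariant free function automatically defines a bounded free function on $\mathcal C^\Lambda$ with the same norm---this is exactly the point you flagged as ``the genuine obstacle,'' and the paper offloads it to \cite{vvw12}. Your route is self-contained and makes the mechanism transparent; the dilation trick for isometry is clean and the Montel-type step is standard in free analysis. The one place you could economize is surjectivity: once $T$ is known to be isometric on polynomials and you have the Cauchy sequences $(\hat p_n)$ for each $s$, you could instead observe that the formal power series of $g_s$ converge coefficientwise as $s\to 1$ and invoke the same bounded-pointwise-limit fact the paper cites, avoiding the subnet extraction---but your version is not wrong, just slightly heavier.
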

Corollary \ref{corollaryIso} follows from Theorem \ref{mainresult} directly.
That is, the map taking
$\hat{f} \in H^\infty(\mathcal C^\Lambda)$ to
$\hat{f}\circ \Phi \in H^\infty_\pi(\mathcal C^d)$ is such an isomorphism.
The map is injective and surjective since it is already since
it is an isomorphism on the level of formal power series,
which uniquely define a free function \cite{vvw12}.

\section{Example: An even free function in two variables}\label{evensection}
	To begin, we discuss a simple nontrivial example of a free ring of invariants.
	We will now explain what our main result, Theorem \ref{mainresult}, says about a specific even free polynomial
	in two variables.

	We say a free polynomial $p \in \mathbb{C}\langle x_1,x_2\rangle$
	is \dfn{even} if
		$$p(X_1,X_2) = p(-X_1, -X_2).$$
	We note that the even free functions form an algebra.
	
	Consider the even free polynomial
		$$p(X) = p(X_1,X_2) = 1 + 3X_1X_2 - 7X_1X_1 - X_2X_1X_2X_2$$
	as a map on the domain $\mathcal{C}^2.$
	
	We first note that it is clearly not a coincidence that $p$ has no odd degree terms.
	Furthermore, if we let
		$$\begin{matrix}
		u_{1}(X) = X_1X_1, & u_{2}(X) = X_1X_2, \\
		u_{3}(X) = X_2X_1, & u_{4}(X) = X_2X_2,
		\end{matrix}$$
	we get that 
		$$p(X) = 1 + 3u_{2}(X) - 7u_{1}(X) - u_{3}(X)u_{4}(X).$$
	Let
		$$\hat{p}(U) = \hat{p}(U_1,U_2,U_3,U_4) = 1 + 3U_{2} - 7U_{1} - U_{3}U_{4}$$
	and
		$$\Phi(X) = (X_1X_1, X_1X_2, X_2X_1, X_2X_2).$$
	Thus,  $$p(X) = \hat{p} \circ \Phi(X).$$
	We are interested in the analytical properties of $\hat{p}$ and $\Phi.$ 
	
	We will show the remarkable fact that $$\Phi(\mathcal{C}^2) \subset \mathcal{C}^4.$$
	Let $X \in \mathcal{C}^2.$ That is,
		$$XX^* = X_1X_1^* + X_2X_2^* < 1.$$
	Since
		$$\Phi(X) = (X_1X_1, X_1X_2, X_2X_1, X_2X_2)$$
	we get that 
		\begin{align*}
			\Phi(X)\Phi(X)^* & = X_1X_1(X_1X_1)^* + X_1X_2(X_1X_2)^* + X_2X_1(X_2X_1)^* \\
			& \phantom{=} \hspace{.5in} + X_2X_2(X_2X_2)^* \\
			& =  X_1X_1X_1^*X_1^* + X_1X_2X_2^*X_1^* + X_2X_1X_1^*X_2^* +X_2X_2X_2^*X_2^* \\
			& =  X_1(X_1X_1^* + X_2X_2^*)X_1^* + X_2(X_1X_1^* + X_2X_2^*)X_2^* \\
			& \leq  X_1X_1^* + X_2X_2^* \\
			& <  1. \\
		\end{align*} 
	Thus, $\Phi(\mathcal{C}^2)  \subset \mathcal{C}^4.$
	

		
	We will now show a curious equality:
		$$\sup_{X \in \mathcal{C}^2} \|p(X)\| = \sup_{U \in \mathcal{C}^4} \|\hat{p}(U)\|.$$
	
	First we show that
		$$\sup_{X \in \mathcal{C}^2} \|p(X)\| \geq \sup_{U \in \mathcal{C}^4} \|\hat{p}(U)\|.$$
	Since $p$
	is a free polynomial and is thus continuous on the closure
	$\cc{\mathcal{C}^2} \subset \mathcal{B}(\hilbert)^2,$
	it is enough to show that
	$$\sup_{X \in \cc{\mathcal{C}^2}} \|p(X)\| \geq \sup_{U \in \mathcal{C}^4} \|\hat{p}(U)\|.$$
	
	Let $(U_1,U_2,U_3,U_4) \in \mathcal{C}^4.$
	Define operators $X_1$ and $X_2$ with the following block structure
		$$X_1 = \bpm
			0 & U_1 & U_2 \\
			1 & 0 & 0 \\
			0 & 0 & 0\epm,$$
		$$X_2 = \bpm
			0 & U_3 & U_4 \\
			0 & 0 & 0 \\
			1 & 0 & 0\epm.$$
	First,
		\begin{align*} X_1X_1^* + X_2X_2^*
		& = 	\bpm
			0 & U_1 & U_2 \\
			1 & 0 & 0 \\
			0 & 0 & 0\epm
			\bpm
			0 & U_1 & U_2 \\
			1 & 0 & 0 \\
			0 & 0 & 0\epm^*
			+
			\bpm
			0 & U_3 & U_4 \\
			0 & 0 & 0 \\
			1 & 0 & 0\epm
			\bpm
			0 & U_3 & U_4 \\
			0 & 0 & 0 \\
			1 & 0 & 0\epm^* \\ \\
		& = 	\bpm
			0 & U_1 & U_2 \\
			1 & 0 & 0 \\
			0 & 0 & 0\epm
			\bpm
			0 & 1 & 0 \\
			U_1^* & 0 & 0 \\
			U_2^* & 0 & 0\epm
			+
			\bpm
			0 & U_3 & U_4 \\
			0 & 0 & 0 \\
			1 & 0 & 0\epm
			\bpm
			0 & 0 & 1 \\
			U_3^* & 0 & 0 \\
			U_4^* & 0 & 0\epm \\ \\
		& = 	\bpm
			U_1U_1^* + U_2U_2^* & 0 & 0 \\
			0 & 1 & 0 \\
			0 & 0 & 0\epm
			+
			\bpm
			U_3U_3^* + U_4U_4^* & 0 & 0 \\
			0 & 0 & 0 \\
			0 & 0 & 1\epm \\ \\
		& = 	\bpm
			U_1U_1^* + U_2U_2^* + U_3U_3^* + U_4U_4^* & 0 & 0 \\
			0 & 1 & 0 \\
			0 & 0 & 1\epm \\
		 &\leq 1.\end{align*}
	That is, $(X_1, X_2) \in \cc{\mathcal{C}^2}.$
	Note 
		\begin{align*}
		X_1X_1 = \bpm
			0 & U_1 & U_2 \\
			1 & 0 & 0 \\
			0 & 0 & 0\epm
			\bpm
			0 & U_1 & U_2 \\
			1 & 0 & 0 \\
			0 & 0 & 0\epm
		=
			\bpm
			U_1 & 0 & 0 \\
			0 & U_1 & U_2 \\
			0 & 0 & 0\epm
		\\ \\
		X_1X_2 = \bpm
			0 & U_1 & U_2 \\
			1 & 0 & 0 \\
			0 & 0 & 0\epm
			\bpm
			0 & U_3 & U_4 \\
			0 & 0 & 0 \\
			1 & 0 & 0\epm
		=
			\bpm
			U_2 & 0 & 0 \\
			0 & U_3 & U_4 \\
			0 & 0 & 0\epm
		\\ \\	
		X_2X_1 =
			\bpm
			0 & U_3 & U_4 \\
			0 & 0 & 0 \\
			1 & 0 & 0\epm
			\bpm
			0 & U_1 & U_2 \\
			1 & 0 & 0 \\
			0 & 0 & 0\epm
		=
			\bpm
			U_3 & 0 & 0 \\
			0 & 0 & 0 \\
			0 & U_1 & U_2\epm
		\\ \\	
		X_2X_2 =
			\bpm
			0 & U_3 & U_4 \\
			0 & 0 & 0 \\
			1 & 0 & 0\epm
			\bpm
			0 & U_3 & U_4 \\
			0 & 0 & 0 \\
			1 & 0 & 0\epm
		=
			\bpm
			U_4 & 0 & 0 \\
			0 & 0 & 0 \\
			0 & U_3 & U_4\epm.
		\end{align*}
	That is,
		$$\Phi(X) =  \bpm
			U & 0 \\
			0 & \tilde{J} \epm$$
	where $\tilde{J}$ denotes some matrix tuple which is irrelevant to our aims.
	So,
		\begin{align*}
		p(X_1,X_2) & = (\hat{p} \circ \Phi)(X_1,X_2) \\
		&=\hat{p}(X_1X_1,X_1X_2,X_2X_1,X_2X_2) \\
		& = \bpm
			\hat{p}(U) 	& 0 \\
			0 		& \hat{p}(\tilde{J})\epm.
		\end{align*}
	Namely,
		$$ \|p(X)\| \geq \|\hat{p}(U)\|.$$
	Thus,
		$$\sup_{X \in \cc{\mathcal{C}^2}} \|p(X)\| \geq \sup_{U \in \mathcal{C}^4} \|\hat{p}(U)\|.$$
	
	To see that
		$$\sup_{X \in \mathcal{C}^2} \|p(X)\| \leq \sup_{U \in \mathcal{C}^4} \|\hat{p}(U)\|,$$
	note that since $\Phi(X) \in \mathcal{C}^4,$
		\begin{align*}
			\sup_{X \in \mathcal{C}^2} \|p(X)\| & = \sup_{X \in \mathcal{C}^2} \|\hat{p}(\Phi(X))\| \\
			& \leq \sup_{U \in \mathcal{C}^4} \|\hat{p}(U)\|.
		\end{align*}
	


\section{The space of free polynomials as an inner product space}
	We now seek to understand the geometry of free polynomials as a Hilbert space.
	\begin{definition}\label{hardydef}
	We define
	$H^2_d$ to be the Hilbert space of free formal power series in $d$ variables whose
	coefficients are in $\ell^2.$ We define $[H^2_d]_n$ to be the subspace of homogeneous
	free polynomials of degree $n.$ For a free polynomial $f$ in $d$ variables,
	we define $M_f$ to be the operator on $H^2_d$ satisfying $M_f g = fg.$
	\end{definition}

	The following lemma describes a grading structure on $H^2_d.$
	\begin{lemma}[Grading lemma]
		Let $p, q$ be homogenous free polynomials of degree $n$
		and $r, s$ be homogenous free polynomials of degree $m.$
		Then,
			$$\ip{pr}{qs} = \ip{p}{q}\ip{r}{s}.$$
		Thus, we identify  $\flattensor{[H^2_d]_n}{[H^2_d]_m} = [H^2_d]_{n+m}.$
	\end{lemma}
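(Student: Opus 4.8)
The plan is to reduce everything to a statement about words (monomials). Recall that $H^2_d$ has an orthonormal basis indexed by words $w$ in the free monoid on $d$ letters, so that $\ip{w}{w'} = \delta_{w,w'}$. Write $p = \sum_{|w|=n} p_w\, w$, $q = \sum_{|w|=n} q_w\, w$, $r = \sum_{|v|=m} r_v\, v$, $s = \sum_{|v|=m} s_v\, v$, where the sums range over words of the indicated length. Then $pr = \sum_{|w|=n,\,|v|=m} p_w r_v\, wv$ and similarly for $qs$. The key combinatorial fact is the \emph{unique factorization of words}: if $w,w'$ are words of length $n$ and $v,v'$ are words of length $m$, then $wv = w'v'$ as words if and only if $w=w'$ and $v=v'$. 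This is immediate because the first $n$ letters of $wv$ determine $w$ and the remaining $m$ letters determine $v$.

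Granting that, the concatenation map $(w,v)\mapsto wv$ from (words of length $n$) $\times$ (words of length $m$) into (words of length $n+m$) is injective, so the vectors $\{wv\}$ appearing in the expansion of $pr$ are orthonormal and indexed bijectively by the pairs $(w,v)$. Therefore
\begin{align*}
\ip{pr}{qs} &= \sum_{|w|=n,\,|v|=m}\ \sum_{|w'|=n,\,|v'|=m} p_w r_v\, \overline{q_{w'} s_{v'}}\, \ip{wv}{w'v'} \\
&= \sum_{|w|=n,\,|v|=m} p_w r_v\, \overline{q_w s_v} \\
&= \Bigl(\sum_{|w|=n} p_w \overline{q_w}\Bigr)\Bigl(\sum_{|v|=m} r_v \overline{s_v}\Bigr) \\
&= \ip{p}{q}\,\ip{r}{s}.
\end{align*}
For the final identification $\flattensor{[H^2_d]_n}{[H^2_d]_m} = [H^2_d]_{n+m}$, note that the bilinear map $(p,r)\mapsto pr$ is, by the computation just performed, an isometry from the algebraic tensor product $[H^2_d]_n \otimes [H^2_d]_m$ (with its natural inner product $\ip{p\otimes r}{q\otimes s} = \ip{p}{q}\ip{r}{s}$) onto a subspace of $[H^2_d]_{n+m}$; it is onto because every word of length $n+m$ factors as $wv$ with $|w|=n$, $|v|=m$, so every homogeneous polynomial of degree $n+m$ is a finite sum of products, i.e.\ lies in the image. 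Since both sides are finite-dimensional of the same dimension $d^{n+m}$, the isometry is a unitary identification.

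I do not expect any serious obstacle here: the only thing to be careful about is bookkeeping with the index sets and making sure the concatenation map really is a bijection onto length-$(n+m)$ words, which is the unique-factorization observation above. One minor point worth stating explicitly is that since $p,q,r,s$ are \emph{polynomials} (finite sums), all the sums above are finite and no convergence issues arise; the same identity then extends by continuity/density to the power-series setting if needed, but for the lemma as stated the polynomial case suffices.
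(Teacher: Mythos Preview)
Your proof is correct and follows essentially the same approach as the paper: expand each polynomial in the monomial basis, use orthonormality of distinct words together with the unique factorization of a length-$(n+m)$ word into a length-$n$ prefix and length-$m$ suffix, and factor the resulting double sum. Your added remarks on surjectivity and the tensor identification are a slight elaboration of what the paper leaves implicit, but the argument is the same.
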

	\begin{proof}
		Write
		$$p(X)=\sum_{I\in \mathcal{I}_{n}}p_{I}X^{I},q(X)=\sum_{I\in \mathcal{I}_{n}}q_{I}X^{I},$$ 
		$$r(X)=\sum_{J\in \mathcal{I}_{m}}r_{J}X^{J}, s(X)=\sum_{J\in \mathcal{I}_{m}}s_{J}X^{J},$$ 
		where $\mathcal{I}_{k}$ is the set of free multi-indicies of degree $k.$
		Observe that $\ip{p}{q}=\sum_{I\in \mathcal{I}_{n}}p_{I}\overline{q_{I}}$
		and $\ip{r}{s}=\sum_{J\in \mathcal{I}_{m}}r_{J}\overline{s_{J}}$, which gives 
		$$\ip{p}{q}\ip{r}{s}=\sum_{I\in \mathcal{I}_{n}}
		\sum_{J\in \mathcal{I}_{m}}
		p_{I}\overline{q_{I}}r_{J}\overline{s_{J}}.$$
		 
		Observe that
		$$p(X)r(X)=\sum_{I\in \mathcal{I}_{n}}
		\sum_{J\in \mathcal{I}_{m}}
		p_{I}r_{J}X^{IJ},$$
		$$q(X)s(X)=
		\sum_{I\in \mathcal{I}_{n}}
		\sum_{J\in \mathcal{I}_{m}}
		q_{I}s_{J}X^{IJ},$$ which gives
		$$\ip{pr}{qs}=\sum_{I\in \mathcal{I}_{n}}
		\sum_{J\in \mathcal{I}_{m}}p_{I}r_{J}\overline{q_I}\overline{s_{J}}=\ip{p}{q}\ip{r}{s}.$$
	\end{proof}

	\begin{corollary}\label{orthosumsame}
		Let $V$ be a subspace of $[H^2_d]_n.$
		Let $u_1, \ldots, u_k$ be an orthonormal basis for $V$
		and $v_1, \ldots, v_k$ be an orthonormal basis for $V.$
		Then,
			$$\sum u_k(X) u_k(X)^* = \sum v_k(X)v_k(X)^*.$$
	\end{corollary}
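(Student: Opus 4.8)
The plan is to realize the sum $\sum_k u_k(X)u_k(X)^*$ as a quantity depending only on the subspace $V$, not on the choice of orthonormal basis. The key observation is that two orthonormal bases $u_1,\dots,u_k$ and $v_1,\dots,v_k$ of the same finite-dimensional space $V$ are related by a unitary change of coordinates: there is a $k\times k$ unitary matrix $W=(w_{ij})$ with $v_i = \sum_j w_{ij} u_j$. I would then compute directly
\begin{align*}
\sum_i v_i(X) v_i(X)^* &= \sum_i \Bigl(\sum_j w_{ij} u_j(X)\Bigr)\Bigl(\sum_\ell w_{i\ell} u_\ell(X)\Bigr)^* \\
&= \sum_{j,\ell} \Bigl(\sum_i w_{ij}\overline{w_{i\ell}}\Bigr) u_j(X) u_\ell(X)^* \\
&= \sum_{j,\ell} \delta_{j\ell}\, u_j(X) u_\ell(X)^* = \sum_j u_j(X) u_j(X)^*,
\end{align*}
where the middle equality uses that the columns of $W$ are orthonormal, i.e. $\sum_i w_{ij}\overline{w_{i\ell}} = \delta_{j\ell}$. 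This is essentially the statement that $\sum_j u_j(X)u_j(X)^*$ equals $E(X)E(X)^*$ for $E(X)$ the "evaluation row operator" $g\mapsto g(X)$ restricted to $V$, which is manifestly basis-independent; the computation above just unwinds that.

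The only point requiring care is the justification that the formal manipulation of infinite-looking objects is legitimate — but here there is nothing to worry about, since $V$ is finite dimensional ($\dim V = k < \infty$), so all the sums over $i,j,\ell$ are finite sums of bounded operators on $\hilbert$, and rearrangement is automatic. Note also that the Grading lemma is not strictly needed for this corollary; what is used is merely that $V$ is a finite-dimensional subspace of $[H^2_d]_n$ and that polynomial evaluation $g \mapsto g(X)$ is linear in $g$, so that a linear change of basis in $V$ transforms the $u_k(X)$ linearly by the same matrix.

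I do not anticipate a genuine obstacle; the "hard part," such as it is, is simply recording the unitary $W$ correctly and tracking conjugates on the correct side, since the $u_k(X)$ are operators and the outer product $u(X)u(X)^*$ is sensitive to whether one writes $v_i = \sum_j w_{ij}u_j$ or its adjoint version. Once the bookkeeping is fixed, the identity $\sum w_{ij}\overline{w_{i\ell}} = \delta_{j\ell}$ closes the argument immediately.
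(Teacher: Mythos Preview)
Your proof is correct. The paper takes a slightly different, though equivalent, route: it introduces the row vector $m_X = (X^I)_{|I|=n}$ of all degree-$n$ monomials and observes that for any $p\in[H^2_d]_n$ one has $p(X)p(X)^* = m_X\bigl(pp^*\otimes 1\bigr)m_X^*$, whence $\sum_i u_i(X)u_i(X)^* = m_X\bigl(\sum_i u_iu_i^*\otimes 1\bigr)m_X^*$; since $\sum_i u_iu_i^*$ is the orthogonal projection onto $V$, the expression is manifestly basis-independent. Your direct computation via the unitary change-of-basis matrix $W$ is more elementary and avoids introducing $m_X$, while the paper's version has the advantage of identifying the invariant quantity explicitly as the projection onto $V$ conjugated by the monomial vector --- a formulation that can be convenient when one later wants to compare such sums for different subspaces. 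Your parenthetical remark about the ``evaluation row operator'' $E(X)$ is in fact exactly the paper's observation in disguise.
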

	\begin{proof}
		Let $m_X = (X^I)_{|I|=n}$ be the row vector of free monomials of degree $n.$
		Note that, for any $p \in [H^2_d]_n,$
			$$p(X) p(X)^* = m_X\tensor{pp^*}{1}m_X.$$
			
		For example, if we took the polynomial $p = x_1 + 2x_2,$
		identifying $[H^2_2]_1$ with $\mathbb{C}^2$
		we get that
			$$p(X)p(X)^* =
			\bpm X_1 & X_2 \epm \bpm 1 \\ 2 \epm\bpm 1 & 2 \epm\bpm X_1^* \\ X_2^* \epm.$$
		
		So we get that
			$$\sum u_i(X) u_i(X)^*
			= \sum m_X\tensor{u_iu_i^*}{1}m_X
			= m_X \tensor{\sum u_iu_i^*}{1}m_X.$$	
		Note that $\sum u_iu_i^*$ is the orthogonal projection onto $V,$
		and thus did not depend on the choice of basis, so we are done.
	\end{proof}
	

	\section{Superorthogonality}
	
	We can now define superorthogonality.
	We will later see that the ring of invariant free polynomials
	is itself superorthogonal.
	\begin{definition}
	An algebra $\mathcal A \subseteq \C\langle x_1, \ldots, x_d \rangle$ is called
	\dfn{superorthogonal} if there exists a superorthonormal basis for $\mathcal A$,
	i.e. a basis $(u_\la)_{\la \in \Lambda}$ for $\mathcal A$ such that $u_\la$
	are homogeneous polynomials, $\norm{u_\la}_{\hilbert^2_d} = 1$, and for all $\la, \mu \in \Lambda$,
		$$u_\la H^2_d \perp u_\mu H^2_d.$$
	
	\end{definition}
	The goal of this section is to prove that a general version of Theorem \ref{mainresult}
	is true for superorthogonal algebras.	
	\begin{theorem}\label{mainresultso}
	Let $\mathcal{A}$ be a superorthogonal algebra.
	Let
	$(u_{\lambda})_{\lambda\in\Lambda}$ be a superorthogonal basis.
	The map $\Phi$ on $\mathcal{C}^d$
	defined by the formula
	$$\Phi(X)=(u_{\lambda}(X))_{\lambda\in\Lambda}$$ satisfies the following properties:
	\begin{itemize}
	\item
	The map $\Phi$
	takes $\mathcal{C}^{d}$ to $\mathcal{C}^{\Lambda}.$
	\item
	Furthermore, for $p$ in the ring of invariant free polynomials for $\pi$, there exists a 
	unique free polynomial $\hat{p}$ such that $p=\hat{p}\circ\Phi.$
	\item Moreover,
	$$\sup_{X \in \mathcal{C}^{d}} \|p(X)\| =
		\sup_{U \in \mathcal{C}^{\Lambda}} \|\hat{p}(U)\|.$$
	\end{itemize}
	Namely, the map taking $\hat{p}$
	to $p$ is an isomorphism of rings from the free algebra
	$\mathbb{C}\langle x_{\lambda}\rangle_{\lambda \in \Lambda}$
	to $\mathcal{A}$.
	\end{theorem}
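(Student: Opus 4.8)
The plan is to break the proof into three parts matching the three bulleted claims, and then assemble the ring isomorphism statement from the first two together with standard facts about free polynomials.

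\textbf{Step 1: $\Phi$ maps $\mathcal C^d$ into $\mathcal C^\Lambda$.} Fix $X \in \mathcal C^d$, so $\sum_{i=1}^d X_i X_i^* < 1$. I want to show $\sum_{\lambda \in \Lambda} u_\lambda(X) u_\lambda(X)^* < 1$. The key is the superorthogonality hypothesis together with the Grading lemma. Group the basis elements by degree: for each $n$, let $\Lambda_n = \{\lambda : \deg u_\lambda = n\}$, and let $V_n = \spn{u_\lambda : \lambda \in \Lambda_n} \subseteq [H^2_d]_n$. By Corollary \ref{orthosumsame}, $\sum_{\lambda \in \Lambda_n} u_\lambda(X) u_\lambda(X)^* = m_X^{(n)} (P_{V_n} \otimes 1) (m_X^{(n)})^*$ where $m_X^{(n)}$ is the row of degree-$n$ monomials and $P_{V_n}$ is the orthogonal projection onto $V_n$. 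Since $P_{V_n} \le I$, this is dominated by $m_X^{(n)} (m_X^{(n)})^* = \sum_{|I| = n} X^I (X^I)^*$, which telescopes: $\sum_{|I|=n} X^I(X^I)^* = \sum_{i} X_i \big( \sum_{|J| = n-1} X^J (X^J)^* \big) X_i^* \le \|\sum_{|J|=n-1} X^J(X^J)^*\| \cdot \sum_i X_i X_i^*$, so inductively $\|\sum_{|I|=n} X^I(X^I)^*\| \le r^n$ where $r = \|\sum X_i X_i^*\| < 1$. Summing the geometric series over $n$ gives $\sum_\lambda u_\lambda(X) u_\lambda(X)^* \le \frac{1}{1-r} \cdot \text{(something)}$ — more carefully, $\sum_n r^n$ with the $\eps$-room argument shows the sum converges in WOT and is $< 1$ after absorbing constants; I will need to be slightly careful to get strict inequality, using that $r \le 1 - \eps$ and that the projections only shrink things. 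This gives membership in $\mathcal C^\Lambda$.

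\textbf{Step 2: existence and uniqueness of $\hat p$.} Existence: since $(u_\lambda)$ generates $\mathcal A$ as an algebra, any $p \in \mathcal A$ is a polynomial expression in the $u_\lambda$, i.e. $p = \hat p \circ \Phi$ for some free polynomial $\hat p$ in the variables $(x_\lambda)_{\lambda \in \Lambda}$. Uniqueness: this is exactly the statement that the $u_\lambda$ are algebraically free, i.e. $\C\langle x_\lambda \rangle_{\lambda \in \Lambda} \to \mathcal A$, $\hat p \mapsto \hat p \circ \Phi$, is injective. Here is where superorthogonality does real work. Suppose $\hat p \ne 0$ but $\hat p \circ \Phi = 0$. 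Decompose $\hat p$ into the part that is "lowest" in a suitable sense; the cleanest approach is to use the grading: $u_\lambda H^2_d \perp u_\mu H^2_d$ for $\lambda \ne \mu$, and more generally one shows by induction on word length in the free monoid on $\Lambda$ that the products $u_{\lambda_1} \cdots u_{\lambda_k} H^2_d$ are mutually orthogonal over distinct words $\lambda_1 \cdots \lambda_k$ — this follows by iterating the defining property and the Grading lemma (identifying $u_{\lambda_1} \cdots u_{\lambda_k}$ as living in the appropriate tensor slot). Consequently the monomials $u_{\lambda_1} \cdots u_{\lambda_k}$ in $H^2_d$ are linearly independent (being norm-one and pairwise orthogonal, in fact). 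So a nontrivial relation $\sum c_w u_w = 0$ in $\mathcal A \subseteq H^2_d$ is impossible, giving injectivity.

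\textbf{Step 3: the norm equality and assembly.} For $p = \hat p \circ \Phi \in \mathcal A$: the inequality $\sup_{X \in \mathcal C^d} \|p(X)\| \le \sup_{U \in \mathcal C^\Lambda} \|\hat p(U)\|$ is immediate from Step 1, since $p(X) = \hat p(\Phi(X))$ and $\Phi(X) \in \mathcal C^\Lambda$. The reverse inequality is the part requiring a construction, exactly as in the even-function example in Section \ref{evensection}: given $U \in \mathcal C^\Lambda$, build a (possibly infinite) block operator tuple $X$ on an enlarged Hilbert space with $(X_1,\dots,X_d) \in \cc{\mathcal C^d}$ such that $\Phi(X)$ has $U$ sitting as a direct summand (the "lag" construction: put $1$'s subdiagonally to realize each monomial $X^I$ as shifting, so that $u_\lambda(X)$ contains $U_\lambda$ in a corner), whence $p(X) = \hat p(\Phi(X))$ has $\hat p(U)$ as a summand and $\|p(X)\| \ge \|\hat p(U)\|$; then use continuity of $p$ on $\cc{\mathcal C^d}$ and density to pass from $\cc{\mathcal C^d}$ to $\mathcal C^d$. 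Finally, combining Steps 1–3: the map $\hat p \mapsto \hat p \circ \Phi$ is an algebra homomorphism $\C\langle x_\lambda \rangle_{\lambda \in \Lambda} \to \mathcal A$, which is surjective (Step 2, existence), injective (Step 2, uniqueness), hence a ring isomorphism, and it is isometric for the supremum norms over the respective row balls (Step 3).

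\textbf{Main obstacle.} I expect the hardest and most technical step to be the norm equality in Step 3 — specifically, the explicit construction of the block operator tuple $X$ realizing a given $U \in \mathcal C^\Lambda$ as a summand of $\Phi(X)$, in the generality where $\Lambda$ is infinite and the $u_\lambda$ are arbitrary superorthogonal homogeneous polynomials rather than monomials. One must choose a common Hilbert space and a coherent system of partial isometries / shifts indexed by the multi-indices appearing in all the $u_\lambda$, verify that the resulting $X$ is a row contraction (using the superorthogonality to control $\sum X_i X_i^*$, analogously to Step 1 run in reverse), and check that each $u_\lambda(X)$ genuinely exhibits $U_\lambda$ in the designated corner. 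The orthogonality-of-products bookkeeping in Step 2 (establishing that distinct words in the $u_\lambda$ give orthogonal subspaces) is also a point to handle carefully, though it should follow cleanly by induction from the Grading lemma and the definition of superorthogonality.
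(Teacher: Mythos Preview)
Your Step~2 is essentially the paper's Lemma~\ref{sofree} and is fine. There are, however, two real problems.

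\textbf{Step 1 is broken.} The crude projection bound $P_{V_n}\le I$ gives
\[
\sum_{\lambda\in\Lambda_n} u_\lambda(X)u_\lambda(X)^* \le \sum_{|I|=n} X^I(X^I)^*,
\]
and summing over $n\ge 1$ yields an operator of norm $\|A\|/(1-\|A\|)$ where $A=\sum X_iX_i^*$; this exceeds~$1$ whenever $\|A\|>1/2$, so you do not get membership in~$\mathcal C^\Lambda$. The phrase ``after absorbing constants'' hides a genuine gap: there is no constant to absorb. The paper's argument (Lemma~\ref{basismap}) uses superorthogonality in an essential, sharper way: the $u_\lambda$ of degree $n+1$ lie in $(\mathcal I\cap[H^2_d]_n)^\perp\otimes[H^2_d]_1$, where $\mathcal I$ is the left ideal generated by $\mathcal A\setminus\{1\}$, so at each degree the new generators occupy only the complement of what has already been accounted for. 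This gives the inductive inequality
\[
\sum_{\deg u_\lambda\le n} u_\lambda(X)u_\lambda(X)^* + \sum_i w_{i,n}(X)w_{i,n}(X)^* \le \sum_i X_iX_i^*,
\]
with the $w_{i,n}$ spanning the complement; without this ``no overcounting'' structure the bound fails.

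\textbf{Step 3: your construction is not the right tool in general.} The explicit block-matrix lift from Section~\ref{evensection} works because the basis there consists of monomials. For an arbitrary superorthogonal basis of homogeneous \emph{polynomials}, designing shifts so that each $u_\lambda(X)$ has $U_\lambda$ in a corner, while simultaneously keeping $X$ a row contraction, is not something you have shown how to do, and it is far from clear it can be done directly. The paper bypasses this entirely: take $X=(M_{x_1}\otimes I,\dots,M_{x_d}\otimes I)$ on the Fock space, observe $\Phi(M_x)=(M_{u_\lambda})$, compress to the invariant subspace $\overline{\mathcal A}$ to get $(M_{x_\lambda})$ in the corner, and then invoke the Frazho--Popescu dilation theorem to realize any $U\in\mathcal C^\Lambda$ as a compression of $(M_{x_\lambda}\otimes I)$. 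This is the missing idea in your Step~3.
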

	Theorem \ref{mainresultso} follows from Lemma \ref{sofree}
	and Lemma \ref{ball}.
	
	We now show that superorthogonal algebras are necessarily free. 
	\begin{lemma}\label{sofree}
		A superorthogonal algebra $\mathcal{A}$ is isomorphic to a free algebra in perhaps infinitely many variables. Specifically, the map
		$$\varphi: \mathbb{C}\langle x_\la \rangle_{\la \in \Lambda}\rightarrow \mathcal A$$
		satisfying $\varphi(x_{\lambda}) = u_{\lambda}$ is an isomorphism.
	\end{lemma}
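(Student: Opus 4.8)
The plan is to show $\varphi$ is an isomorphism by proving that the images under $\varphi$ of the words in the free generators form an \emph{orthonormal} family in $H^2_d$. Since $\C\langle x_\la\rangle_{\la\in\Lambda}$ is the free unital algebra on the set $\{x_\la\}$, the assignment $x_\la\mapsto u_\la\in\mathcal A$ extends to a unital algebra homomorphism $\varphi$, and $\varphi$ is surjective because $(u_\la)_{\la\in\Lambda}$ generates $\mathcal A$. So the whole content is injectivity, which is equivalent to the assertion that the elements $\varphi(x_{\la_1}\cdots x_{\la_k})=u_{\la_1}\cdots u_{\la_k}$, as $x_{\la_1}\cdots x_{\la_k}$ ranges over all words of $\C\langle x_\la\rangle_{\la\in\Lambda}$, are linearly independent in $\C\langle x_1,\dots,x_d\rangle\subseteq H^2_d$; if so, then a relation $\varphi(\sum_i c_i w_i)=0$ with distinct words $w_i$ forces all $c_i=0$.

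Two preliminaries. First, each $u_\la$ has positive degree: if $u_\la$ were a nonzero constant then $u_\la H^2_d=H^2_d$, which is orthogonal to no nonzero subspace $u_\mu H^2_d$, contradicting superorthogonality (this is automatic when $\abs{\Lambda}\ge 2$; the case $\abs{\Lambda}\le 1$ is degenerate and should be recorded separately, where freeness reduces to the independence of $1,u_\la,u_\la^2,\dots$, again immediate from degree counting once $\deg u_\la\ge 1$). Second, by the Grading Lemma together with $\norm{u_\la}_{H^2_d}=1$, every word $u_{\la_1}\cdots u_{\la_k}$ is a homogeneous polynomial of degree $\sum_i\deg u_{\la_i}$ with $\norm{u_{\la_1}\cdots u_{\la_k}}_{H^2_d}=\prod_i\norm{u_{\la_i}}_{H^2_d}=1$; in particular each word maps to a unit vector and the empty word maps to $1$.

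Then I would take two distinct words $w=x_{\la_1}\cdots x_{\la_k}$ and $w'=x_{\mu_1}\cdots x_{\mu_l}$ and show $\ip{u_{\la_1}\cdots u_{\la_k}}{u_{\mu_1}\cdots u_{\mu_l}}=0$. If the two products have different total degree, the inner product vanishes since homogeneous polynomials of distinct degrees are orthogonal in $H^2_d$; so assume the total degrees agree. Since every $\deg u_\la$ is positive, neither word can be a proper prefix of the other (that would strictly raise the degree), so there is a least index $j\le\min(k,l)$ with $\la_i=\mu_i$ for $i<j$ and $\la_j\ne\mu_j$. Peeling off the common prefix one letter at a time with the Grading Lemma — at each stage the two trailing products still have equal degree, so its hypotheses hold, and each step contributes a factor $\norm{u_{\la_i}}_{H^2_d}^2=1$ — reduces the inner product to $\ip{u_{\la_j}\cdots u_{\la_k}}{u_{\mu_j}\cdots u_{\mu_l}}$. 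Since $u_{\la_j}\cdots u_{\la_k}\in u_{\la_j}H^2_d$ and $u_{\mu_j}\cdots u_{\mu_l}\in u_{\mu_j}H^2_d$ (reading the empty tail as $1\in H^2_d$) and $\la_j\ne\mu_j$, superorthogonality gives $u_{\la_j}H^2_d\perp u_{\mu_j}H^2_d$, so this inner product is $0$. Hence the word-images are orthonormal, therefore linearly independent, so $\ker\varphi=0$ and $\varphi$ is an isomorphism.

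I do not expect a serious obstacle: the one conceptual move is to pass into $H^2_d$ and reduce linear independence of words in the $u_\la$ to their orthonormality, after which the only real work is the degree bookkeeping that lets the Grading Lemma cancel the shared prefix, and the single substantive input is the superorthogonality relation $u_{\la_j}H^2_d\perp u_{\mu_j}H^2_d$ applied at the first position where the two words disagree. The mild nuisances to get right in the writeup are the positive-degree observation (and the $\abs{\Lambda}\le 1$ edge case) and the verification that the trailing factors keep matching degrees so the Grading Lemma remains applicable throughout the cancellation.
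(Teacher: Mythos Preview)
Your proof is correct and follows essentially the same approach as the paper: reduce injectivity to orthogonality of distinct word-images in $H^2_d$, locate the first index where the words differ, split off the common prefix via the Grading Lemma, and conclude by superorthogonality at that index. You are simply more careful than the paper about a few points it leaves implicit---the positive-degree observation, the ``neither word is a proper prefix of the other'' issue, and the degree-matching needed to invoke the Grading Lemma---but the core argument is identical.
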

	\begin{proof}
		Note that it is enough to show that for
		any two distinct products
		$\prod^{n}_{i=1} u_{\lambda_i},$ $\prod^{m}_{j=1} u_{\kappa_j}$		
		of the same degree as free polynomials in
			$\mathcal A \subset \C\langle x_1, \ldots, x_d \rangle$
		that
			$$\ip{\prod^{n}_{i=1} u_{\lambda_i}}{\prod^{m}_{j=1} u_{\kappa_j}}=0,$$
		since then the words in $u_\lambda$ are linearly independent.
		Since the words are distinct,
		there is a $p$ such that $\lambda_i = \kappa_i$ for all $i<p$
		and $\lambda_p \neq \kappa_p.$
		So,
		$$\ip{\prod^{n}_{i=1} u_{\lambda_i}}{\prod^{m}_{j=1} u_{\kappa_j}}=
		\ip{\prod^{p-1}_{i=1} u_{\lambda_i}}{\prod^{p-1}_{j=1} u_{\kappa_j}}
		\ip{\prod^{n}_{i=p} u_{\lambda_i}}{\prod^{m}_{j=p} u_{\kappa_j}}.$$
		Note that $\prod^{n}_{i=p} u_{\lambda_i} \in u_{\lambda_p}H^2_d$ and
		$\prod^{m}_{j=p} u_{\kappa_j} \in u_{\kappa_p}H^2_d$
		which implies that the two products are orthogonal since 
		$u_{\lambda_p}$ and $u_{\kappa_p}$ are superorthogonal and thus the desired inner
		product is $0.$
	\end{proof}
	
	We now show that the superothogonal basis maps $\mathcal C^{d}$
	into $\mathcal C^{\Lambda}$.
	\begin{lemma}\label{basismap}
	Let $\mathcal{A}$ be a superorthogonal algebra.
	For a superorthonormal basis $(u_\la)$, the map
	$\Phi:\mathcal{C}^d \to \mathcal{B}(\hilbert)^{\Lambda}$ given by
		$$\Phi(x) = (u_\la(x))_{\la \in \Lambda}$$
	has $\ran\Phi \subseteq \mathcal C^{\Lambda}$, that is
		$$\Phi: \mathcal{C}^d \to \mathcal{C}^\Lambda.$$
	We call $\Phi$ the \dfn{superorthogonal basis map}.
	\end{lemma}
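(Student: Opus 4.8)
The plan is to recognize the claim as an instance of Popescu's noncommutative von Neumann inequality for the row ball, with superorthogonality supplying the contractivity needed on the model side. Concretely, I will show that for every $X\in\mathcal{C}^d$ and every finite $\Lambda'\subseteq\Lambda$ one has $\sum_{\la\in\Lambda'}u_\la(X)u_\la(X)\ad\leq I_\hilbert$, by exhibiting the left-hand side as a compression of $\bigl(\sum_{\la\in\Lambda'}M_{u_\la}M_{u_\la}\ad\bigr)\otimes I_\hilbert$ acting on $H^2_d\otimes\hilbert$; the strict inequality defining $\mathcal{C}^\Lambda$ is then recovered from homogeneity of the $u_\la$, and the passage to infinite $\Lambda$ from a supremum over finite $\Lambda'$.

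First I would use the Grading lemma: since $\norm{u_\la g}_{H^2_d}=\norm{u_\la}_{H^2_d}\norm{g}_{H^2_d}=\norm{g}_{H^2_d}$ for all $g$, each $M_{u_\la}$ is an isometry on $H^2_d$ and $M_{u_\la}M_{u_\la}\ad$ is the orthogonal projection onto the closed subspace $u_\la H^2_d$. Superorthogonality says the family $(u_\la H^2_d)_{\la\in\Lambda}$ is pairwise orthogonal, so for any finite $\Lambda'\subseteq\Lambda$ the operator $\sum_{\la\in\Lambda'}M_{u_\la}M_{u_\la}\ad$ is the projection onto $\bigoplus_{\la\in\Lambda'}u_\la H^2_d$; in particular $\sum_{\la\in\Lambda'}M_{u_\la}M_{u_\la}\ad\leq I_{H^2_d}$. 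This is the only place superorthogonality enters.

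Next I would construct the point-evaluation dilation at a fixed $X\in\mathcal{C}^d$. Writing $\sum_i X_iX_i\ad\leq 1-\eps$ and $D_X=(I-\sum_i X_iX_i\ad)^{1/2}$, a telescoping identity gives $\sum_I X^I D_X^2 (X^I)\ad=I_\hilbert$, the sum over all free multi-indices $I$ being convergent because $\sum_{|I|=n}X^I(X^I)\ad\leq(1-\eps)^n\to 0$. Hence $G_X\colon\hilbert\to H^2_d\otimes\hilbert$, $G_Xh=\sum_I x^I\otimes D_X(X^I)\ad h$, is an isometry, and a direct computation shows $G_X$ intertwines $X_i\ad$ with $M_{x_i}\ad\otimes I_\hilbert$; iterating over words and taking linear combinations yields, for every free polynomial $p$, the evaluation identity $p(X)=G_X\ad(M_p\otimes I_\hilbert)G_X$ (this is Popescu's inequality, which one may instead simply quote from \cite{po06}). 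Combining with the previous paragraph, for finite $\Lambda'$,
\begin{align*}
\sum_{\la\in\Lambda'}u_\la(X)u_\la(X)\ad
&=\sum_{\la\in\Lambda'}G_X\ad(M_{u_\la}\otimes I)(G_XG_X\ad)(M_{u_\la}\ad\otimes I)G_X\\
&\leq \sum_{\la\in\Lambda'}G_X\ad(M_{u_\la}\otimes I)(M_{u_\la}\ad\otimes I)G_X\\
&= G_X\ad\Bigl(\bigl(\textstyle\sum_{\la\in\Lambda'}M_{u_\la}M_{u_\la}\ad\bigr)\otimes I\Bigr)G_X \;\leq\; G_X\ad G_X=I_\hilbert,
\end{align*}
where the first inequality uses $G_XG_X\ad\leq I$ together with positivity, and the last uses Step $1$.

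To finish, I would upgrade this to the strict inequality: choosing $\delta>0$ with $(1+\delta)^2(1-\eps)<1$ we have $(1+\delta)X\in\mathcal{C}^d$, and since each $u_\la$ is homogeneous of degree $n_\la\geq 1$ we have $u_\la((1+\delta)X)=(1+\delta)^{n_\la}u_\la(X)$; applying the displayed estimate to $(1+\delta)X$ and dividing gives $\sum_{\la\in\Lambda'}u_\la(X)u_\la(X)\ad\leq(1+\delta)^{-2}I_\hilbert$ for every finite $\Lambda'$. Taking the supremum over finite $\Lambda'$ shows $\sum_{\la\in\Lambda}u_\la(X)u_\la(X)\ad$ converges in the weak operator topology with sum $\leq(1+\delta)^{-2}I<I$, i.e. $\Phi(X)\in\mathcal{C}^\Lambda$. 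I expect the dilation identity $p(X)=G_X\ad(M_p\otimes I)G_X$ — equivalently, complete contractivity of the map $M_p\mapsto p(X)$ on the noncommutative disk algebra for $X$ in the row ball — to be the only nontrivial ingredient; everything else is bookkeeping, with the Grading lemma doing the real work in Step $1$ and homogeneity of the $u_\la$ upgrading $\leq$ to $<$ at the end.
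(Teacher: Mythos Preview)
Your argument is correct and genuinely different from the paper's. The paper proceeds by a direct, elementary induction on degree: it proves the stronger inequality
\[
\sum_{\deg u_\la \le n} u_\la(X)u_\la(X)\ad \;+\; \sum_i w_{i,n}(X)w_{i,n}(X)\ad \;\le\; \sum_{i=1}^d X_iX_i\ad,
\]
where the $w_{i,n}$ form an orthonormal basis for the complement in $[H^2_d]_n$ of the left ideal generated by $\mathcal A\setminus\{1\}$. The inductive step inserts $\sum_j X_jX_j\ad<1$ between $w_{i,n}(X)$ and $w_{i,n}(X)\ad$ and then uses Corollary~\ref{orthosumsame} to rewrite the degree-$(n{+}1)$ leftover in terms of the new $u_\la$'s and $w_{i,n+1}$'s. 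No dilation theory is invoked; strict inequality falls out directly because the right-hand side is already $<1$.

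Your route is more conceptual: on the model tuple $M_x$ the inequality $\sum_\la M_{u_\la}M_{u_\la}\ad\le I$ is immediate from superorthogonality (pairwise orthogonal range projections), and Popescu's Poisson kernel transports this to any $X\in\mathcal C^d$. What you gain is that the role of superorthogonality becomes transparent, and the argument generalizes verbatim to any family of operators whose corresponding $M_{u_\la}$ have pairwise orthogonal ranges. What the paper's approach buys is self-containment and the sharper bound $\sum_\la u_\la(X)u_\la(X)\ad\le\sum_i X_iX_i\ad$, so the strict inequality requires no separate homogeneity argument. Note also that the paper later \emph{does} use the Frazho--Popescu dilation, but only in Lemma~\ref{ball} for the norm equality, not here.
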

	\begin{proof}
	Let $X \in \mathcal C^d.$
	That is,
		$$\sum^{d}_{i=1} X_iX_i^* <1.$$
	Let $\mathcal I$ be the left ideal generated by $\mathcal A \setminus \{1\}$ in
	$\mathbb{C}\langle x_1, \ldots, x_d\rangle,$ that is, the span of the
	elements of the form $ab$ where $a\in \mathcal{A} \setminus \{1\}$ and
	$b \in \mathbb{C}\langle x_1, \ldots, x_d\rangle.$
	
	To show that
		$$\sum_{\lambda\in \Lambda}
		u_{\lambda}(X)u_{\lambda}(X)^* <1,$$
	we will show by induction that
		\beq \label{inductivehypothesis}		
		\sum_{\lambda\in \Lambda, \text{deg }u_{\lambda}\leq n}
		u_{\lambda}(X)u_{\lambda}(X)^*
		+ \sum^{k_n}_{i=1} w_{i,n}(X)w_{i,n}(X)^* \leq \sum^{d}_{i=1} X_iX_i^*
		\eeq
	where $w_{i,n}$ form an orthonormal basis for $(\mathcal I \cap [H^2_d]_n)^{\perp}.$
	
	Note that for $n=1,$ Equation \eqref{inductivehypothesis} becomes
		$$\sum_{\lambda\in \Lambda, \text{deg }u_{\lambda} = 1}
		u_{\lambda}(X)u_{\lambda}(X)^*
		+ \sum^{k_1}_{i=1} w_{i,1}(X)w_{i,1}(X)^* = \sum^{d}_{i=1} X_iX_i^*$$
	which holds by Lemma \ref{orthosumsame} since the set of $u_\lambda$
	of degree one combined with the set of $w_{i,1}$ must form
	a basis for $[H^2_d]_1.$
	
	Now suppose that Equation \eqref{inductivehypothesis}
	holds for $n.$ That is,
	$$\sum_{\lambda\in \Lambda, \text{deg }u_{\lambda}\leq n}
		u_{\lambda}(X)u_{\lambda}(X)^*
		+ \sum^{k_n}_{i=1} w_{i,n}(X)w_{i,n}(X)^* \leq \sum^{d}_{i=1} X_iX_i^*.$$
	We will now show that it holds for $n+1.$
	Since
		$$\sum^{d}_{i=1} X_iX_i^* <1,$$
	we get that 
		$$\sum_{\begin{smallmatrix}\lambda\in \Lambda, \\
		 \text{deg }u_{\lambda}\leq n
		\end{smallmatrix}}
		u_{\lambda}(X)u_{\lambda}(X)^*
		+ \sum^{k_n}_{i=1} w_{i,n}(X)(\sum^{d}_{j=1} X_jX_j^*)w_{i,n}(X)^* \leq \sum^{d}_{i=1} X_iX_i^*.$$
	So
		$$\sum_{\lambda\in \Lambda, \text{deg }u_{\lambda}\leq n}
		u_{\lambda}(X)u_{\lambda}(X)^*
		+ \sum^{k_n,d}_{i=1, j=1} w_{i,n}(X)X_jX_j^*w_{i,n}(X)^* \leq \sum^{d}_{i=1} X_iX_i^*.
		$$
	Note that any $u_\lambda$ of degree $n+1$ must be in
	the subspace $(\mathcal I \cap [H^2_d]_n)^{\perp}\otimes [H^2_d]_1$
	by the definition of superorthogonality.
	Furthermore,
	the combination of the $u_\lambda$ of degree $n+1$ and $w_{i,n+1}$ form an
	orthonormal basis for $(\mathcal I \cap [H^2_d]_n)^{\perp}\otimes [H^2_d]_1.$
	On the other hand, the $w_{i,n}x_j$ form an orthonormal basis for
	$(\mathcal I \cap [H^2_d]_n)^{\perp}\otimes [H^2_d]_1.$
	So, by Lemma \ref{orthosumsame}, we get that
			\begin{flalign*}
		\sum^{k_n,d}_{i=1, j=1} w_{i,n}(X)X_jX_j^*w_{i,n}(X)^* = & & &
		\end{flalign*}
		$$
		\sum_{\begin{smallmatrix}\lambda\in \Lambda, \\
		\text{deg }u_{\lambda}= n+1\end{smallmatrix}}
		u_{\lambda}(X)u_{\lambda}(X)^* +
		\sum^{k_{n+1}}_{i=1} w_{i,n+1}(X)w_{i,n+1}(X)^*, 
		$$
	which immediately implies that
	$$
		\sum_{\lambda\in \Lambda, \text{deg }u_{\lambda}\leq n+1}
		u_{\lambda}(X)u_{\lambda}(X)^*
		+ \sum^{k_{n+1}}_{i=1} w_{i,n+1}(X)w_{i,n+1}(X)^* \leq \sum^{d}_{i=1} X_iX_i^*.$$
	
	\end{proof}


	\begin{lemma}\label{ball}
	Let $\mathcal A$ be a superorthogonal algebra.
	Let $p \in \mathcal{A}$. Then there exists a unique free polynomial
	$\hat{p}$ such that
		$$ \hat{p} \circ \Phi = p,$$
	where $\Phi$ is a superorthonormal basis map for $\mathcal A$ as in Lemma \ref{basismap}.
	Furthermore,
	$$\sup_{X \in \mathcal{C}^{d}} \|p(X)\| =
		\sup_{U \in \mathcal{C}^{\Lambda}} \|\hat{p}(U)\|.$$
	\end{lemma}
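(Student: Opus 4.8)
The plan is to reduce everything to the structural facts already established plus the model theory of the full free shift. The existence and uniqueness of $\hat p$ are immediate: by Lemma \ref{sofree} the substitution homomorphism $\varphi:\mathbb{C}\langle x_\lambda\rangle_{\lambda\in\Lambda}\to\mathcal A$ with $\varphi(x_\lambda)=u_\lambda$ is an algebra isomorphism, so I would set $\hat p=\varphi^{-1}(p)$. Since evaluation of a polynomial commutes with substitution, the identity $\varphi(\hat p)=p$ in $\mathcal A$ is literally the statement $\hat p\circ\Phi=p$ on $\mathcal C^d$, and injectivity of $\varphi$ gives uniqueness. One of the two norm inequalities is then immediate: for $X\in\mathcal C^d$, Lemma \ref{basismap} gives $\Phi(X)\in\mathcal C^\Lambda$, so $\|p(X)\|=\|\hat p(\Phi(X))\|\le\sup_{U\in\mathcal C^\Lambda}\|\hat p(U)\|$, and taking the supremum over $X$ yields $\sup_{\mathcal C^d}\|p\|\le\sup_{\mathcal C^\Lambda}\|\hat p\|$.

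The real content is the reverse inequality, and here I would pass to the boundary point of $\mathcal C^d$ given by the left creation operators. On $H^2_d$ let $L=(L_1,\dots,L_d)$ with $L_if=x_if$; then $\sum_i L_iL_i^*=I-P_{\mathbb{C}1}\le I$, so $L\in\overline{\mathcal C^d}$, and $q(L)=M_q$ for every free polynomial $q$, whence $\Phi(L)=(M_{u_\lambda})_{\lambda\in\Lambda}$ and $p(L)=\hat p\big((M_{u_\lambda})_\lambda\big)$. The key observation is that superorthogonality forces $(M_{u_\lambda})_\lambda$ to look, on the right subspace, like the full free shift indexed by $\Lambda$: the Grading Lemma gives $M_{u_\lambda}^*M_{u_\mu}=\delta_{\lambda\mu}I$ (the diagonal terms are the isometry relation coming from $\|u_\lambda\|=1$; the off-diagonal terms vanish since $u_\lambda H^2_d\perp u_\mu H^2_d$), so $(M_{u_\lambda})_\lambda$ is a row of isometries with pairwise orthogonal ranges. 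Because $\mathcal A=\mathbb{C}1\oplus\bigoplus_\lambda u_\lambda\mathcal A$ orthogonally and $M_{u_\lambda}^*1=0$, the closure $\overline{\mathcal A}\subseteq H^2_d$ is both invariant and coinvariant for every $M_{u_\lambda}$, hence reducing, and the word map $u_{\lambda_1}\cdots u_{\lambda_k}\mapsto e_{\lambda_1}\otimes\cdots\otimes e_{\lambda_k}$ is a unitary from $\overline{\mathcal A}$ onto the full Fock space $\mathcal F(\ell^2(\Lambda))$ intertwining $\big(M_{u_\lambda}|_{\overline{\mathcal A}}\big)_\lambda$ with the free shift $S=(S_\lambda)_{\lambda\in\Lambda}$.

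Granting this, $\hat p(S)$ is unitarily equivalent to $p(L)|_{\overline{\mathcal A}}$, so $\|\hat p(S)\|\le\|p(L)\|=\lim_{r\to 1^-}\|p(rL)\|\le\sup_{\mathcal C^d}\|p\|$, using continuity of the polynomial $p$ and $rL\in\mathcal C^d$ for $r<1$. Combining this with the noncommutative von Neumann inequality for row contractions (Popescu), which gives $\|\hat p(U)\|\le\|\hat p(S)\|$ for every $U\in\mathcal C^\Lambda$, we obtain $\sup_{\mathcal C^\Lambda}\|\hat p\|\le\sup_{\mathcal C^d}\|p\|$, and together with the easy inequality this finishes the proof.

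I expect the main obstacle to be the identification in the second paragraph of $\big(\overline{\mathcal A},(M_{u_\lambda}|_{\overline{\mathcal A}})_\lambda\big)$ with the $\Lambda$-variable free shift: coinvariance of $\overline{\mathcal A}$ is the subtle half, and one must check that superorthogonality is precisely the hypothesis making the word map an isometry. If one prefers to avoid citing Popescu's inequality and keep the argument self-contained, the alternative is to dilate a given $U\in\mathcal C^\Lambda$ directly to some $X\in\overline{\mathcal C^d}$ with $\Phi(X)\cong U\oplus\tilde J$, generalizing the explicit block-operator construction of Section \ref{evensection}; this is more elementary but the bookkeeping across a superorthonormal basis of mixed degrees is then where the work goes.
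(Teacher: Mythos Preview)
Your proof is correct and follows essentially the same route as the paper: both identify the restriction of $(M_{u_\lambda})_\lambda$ to $\overline{\mathcal A}\subset H^2_d$ with the free shift $S$ on $\mathcal F(\ell^2(\Lambda))$ and then invoke Popescu's dilation/von~Neumann inequality to bound $\|\hat p(U)\|$ by $\|p(M_x)\|$. The only cosmetic difference is that the paper uses just invariance of $\overline{\mathcal A}$ (giving a block upper-triangular form for $M_{u_\lambda}$) rather than your stronger (but correct) observation that $\overline{\mathcal A}$ is reducing, and the paper cites the Frazho--Popescu dilation theorem where you cite the equivalent von~Neumann inequality.
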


	\begin{proof}
	The existence of $\hat{p}$ follows from the fact that the coordinates of
	$\Phi$ form a basis for $\mathcal{A}.$
	The uniqueness of $\hat{p}$ follows from the fact that $\mathcal{A}$ is free
	by Lemma \ref{sofree}.
	
	So it remains to show the equality
		$$\sup_{X \in \mathcal{C}^{d}} \|p(X)\| =
		\sup_{U \in \mathcal{C}^{\Lambda}} \|\hat{p}(U)\|.$$
	
	To	show that
		$$\sup_{X \in \mathcal{C}^{d}} \|p(X)\| \geq
		\sup_{U \in \mathcal{C}^{\Lambda}} \|\hat{p}(U)\|,$$
	we show the equivalent inequality
		$$\sup_{X \in \cc{\mathcal{C}^{d}}} \|p(X)\| \geq
		\sup_{U \in \mathcal{C}^{\Lambda}} \|\hat{p}(U)\|.$$
	
	Similarly to our example for even functions (Section \ref{evensection}),
	given a $U \in \mathcal{C}^{\Lambda}$
	we would like to find an $X\in \cc{\mathcal{C}^{d}}$
	such that there is some projection $P$ so that
		\beq \label{equationprojection}  P\hat{p}({\Phi(X)})P  = \hat{p}(U)\eeq
	and thus that
		$$ \|p(X)\| \geq \|\hat{p}(U)\|.$$
	
	Let $$M_x = (M_{x_1},\ldots ,M_{x_d})$$
	as in Definition \ref{hardydef}.
	Note that $M_x \in \cc{\mathcal{C}^d}.$
	So, $\Phi(M_x) = (M_{u_\lambda})_{\lambda\in \Lambda}.$
	Decompose $H^2_d = \cc{\mathcal A}^{H^2_d} \oplus \mathcal J.$
	Since the algebra $\mathcal A$ is a joint invariant subspace for all the
	$M_{u_\lambda},$ we get that
	$$M_{u_\lambda} = \bpm M_{x_{\lambda}} & J_\lambda \\ 0 & K_\lambda \epm$$
	with respect to the decomposition,  where $J_\lambda$ and $K_\lambda$ are some operators
	which will be irrelevant to this discussion.
	The Frazho-Popescu dilation theorem \cite{frazho84, po89}
	states that for any $U\in \mathcal{C}^{\Lambda}$,
	there is a projection $\tilde{P}$ such
	that for any free polynomial $q,$
		$$\tilde{P}q((M_{x_{\lambda}} \otimes I))\tilde{P} = q(U).$$
	Thus, there is indeed an $X$ and a projection as desired in  \eqref{equationprojection},
	namely, taking $X = (M_{x_1}\otimes I,\ldots ,M_{x_d}\otimes I)$ and the projection
	as constructed above.
	
	Thus, for every $U \in \mathcal{C}^{\Lambda}$
	there exists an $X \in \cc{\mathcal{C}^d}$
	such that
		$$\|p(X)\| \geq\|\hat{p}(U)\|$$
	and so
		$$\sup_{X \in \cc{\mathcal{C}^{d}}} \|p(X)\| \geq
		\sup_{U \in \mathcal{C}^{\Lambda}} \|\hat{p}(U)\|.$$
		
	To see that
		$$\sup_{X \in \mathcal{C}^d} \|p(X)\| \leq \sup_{U \in \mathcal{C}^\Lambda} \|\hat{p}(U)\|$$
	we note that $\Phi(X) \in \mathcal{C}^\Lambda$ by Lemma \ref{basismap}.
	\end{proof}
	
	Now Theorem \ref{mainresultso} follows immediately.

\section{Example: Free functions in three variables invariant under the natural action of the cyclic group with three elements}\label{cyclic3example}
We now show how to construct a superorthonormal
basis for the ring of free polynomials in three variables which are invariant
under the natural action of the free group. That is, we want to understand 
free functions which satisfy the identity
	$$f(X_1,X_2,X_3) = f(X_2,X_3, X_1)$$
and show that they form a superorthogonal algebra.

Let $\sigma$ denote a generator of the cyclic group with three elements.
Define the action of $\sigma$ on free functions in three variables by
$$(\sigma \cdot f)(X_1,X_2,X_3) = f(X_2,X_3, X_1).$$
With this notation we are trying to understand functions such that
	$$\sigma \cdot f = f.$$

Let $\omega$ be a nontrivial third root of unity.
Consider the following three linear polynomials:
	\begin{align*}
	u_0(X_1,X_2,X_3) & = \frac{X_1 + X_2 + X_3}{\sqrt{3}} \\
	u_1(X_1,X_2,X_3) & = \frac{X_1 + \omega X_2 + \cc{\omega} X_3}{\sqrt{3}} \\
	u_{-1}(X_1,X_2,X_3) & = \frac{X_1 + \cc{\omega} X_2 +  \omega X_3}{\sqrt{3}}.
	\end{align*}
Clearly, the function $u_0$ is fixed by the natural action of the cyclic group with three elements.
However, $$\sigma \cdot u_1 = \omega u_1,$$ and $$\sigma \cdot u_{-1} = \cc{\omega} u_1.$$
That is, they are \emph{eigenfunctions} of the action $\sigma$ on free polynomials in three variables.

In fact, any product of the $u_i$ will be an eigenfunction of the action of $\sigma.$
For example,
		$$\sigma \cdot (u_{-1}u_1u_1) = \omega u_{-1}u_1u_1.$$
Thus, it can be observed that $\prod_j u_{i_j}$ is in the ring of invariant free polynomials under the action of the cyclic group
with three elements
if and only if $\sum_j i_j \equiv_3 0,$
and furthermore that products satisfying this condition span the algebra of free polynomials
in three variables which are fixed by the natural action of the cyclic group with three elements.

So, if we choose products $\prod^N_{j=1} u_{i_j}$ such that $\sum_j i_j \equiv_3 0,$ which are primitive in that no partial product $\prod^n_{j=1} u_{i_j}$ is in the ring of invariant free polynomials, we will obtain a basis for our algebra. To show that this basis is superorthogonal, it is
enough to show that any two distinct products $\prod^N_{j=1} u_{i_j},$ $\prod^N_{j=1} u_{k_j}$ are orthogonal.
However, by the grading lemma,
	$$\ip{\prod^N_{j=1} u_{i_j}}{\prod^N_{j=1} u_{k_j}} = \prod^N_j \ip{u_{i_j}}{u_{k_j}}.$$
Since the two products were assumed to be not equal, the orthogonality of the $u_i$ implies that at least
one of the $\ip{u_{i_j}}{u_{k_j}} =0,$ so we are done.

For the action of a general finite group,
an explicit construction of a superorthogonal basis for the ring of invariants is more difficult.
However, the existence of such a basis can be established using some basic representation theory which we
do in the next section. Later, we will return to the issue of an explicit construction for specific classes of groups for which the problem is tractible.

\section{The ring of invariant free polynomials is superorthogonal}
In order to prove Theorem \ref{mainresult} by Theorem \ref{mainresultso}, it is sufficient to show that the ring of invariant free polynomials is superorthogonal.
	\begin{definition}
		Let $\pi$ be a unitary representation of a finite group $G.$
		Let $\pi_n$ denote the action of $\pi$ on $[H^2_d]_n.$
		We make the identification that $\pi_1 = \pi.$
	\end{definition}
	
Let $p \in [H^2_d]_n$ and $q \in [H^2_d]_m$. Note that 
	$$\pi_{n+m} \cdot pq = (\pi_n\cdot p)(\pi_m \cdot q),$$
which translates to the following formal observation by the grading lemma.
	\begin{observation}\label{obs}
		Let $\pi$ be a unitary representation of a finite group $G.$
		Then,
		$$\flattensor{\pi_n}{\pi_m} = \pi_{n+m}$$
		with the identification made in the grading lemma.
	\end{observation}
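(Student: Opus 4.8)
The plan is to establish Observation~\ref{obs} as a direct consequence of the grading lemma together with the fact that $\pi_k$ is, by definition, the action of the group on the space $[H^2_d]_k$ of homogeneous polynomials of degree $k$, an action induced by the linear substitution $\pi = \pi_1$ on the variables. The statement $\flattensor{\pi_n}{\pi_m} = \pi_{n+m}$ is shorthand: under the identification $\flattensor{[H^2_d]_n}{[H^2_d]_m} = [H^2_d]_{n+m}$ supplied by the grading lemma, the tensor-product representation $\pi_n \otimes \pi_m$ acting on the left-hand space is carried to the representation $\pi_{n+m}$ acting on the right-hand space. So the content is purely a bookkeeping check that the group action respects the tensor decomposition.

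**First I would** fix $\sigma \in G$ and recall that the action on free functions is $(\sigma \cdot f)(X) = f(\pi(\sigma)(X))$, which restricts on $[H^2_d]_k$ to a unitary operator $\pi_k(\sigma)$ (unitarity because $\pi(\sigma)$ is unitary and the grading lemma's inner product is multiplicative — this is already implicit in the surrounding text). **Then I would** observe the multiplicativity of the substitution: for $p \in [H^2_d]_n$ and $q \in [H^2_d]_m$, since substitution of $\pi(\sigma)(X)$ for $X$ is an algebra homomorphism of $\C\langle x_1,\dots,x_d\rangle$, we have
$$\pi_{n+m}(\sigma)\cdot (pq) = (pq)(\pi(\sigma)(X)) = p(\pi(\sigma)(X))\, q(\pi(\sigma)(X)) = (\pi_n(\sigma)\cdot p)(\pi_m(\sigma)\cdot q).$$
This is exactly the displayed identity $\pi_{n+m}\cdot pq = (\pi_n \cdot p)(\pi_m \cdot q)$ stated just before the observation. **Finally**, applying the grading lemma's identification $pq \leftrightarrow p \otimes q$, the right-hand side is precisely $(\pi_n(\sigma) \otimes \pi_m(\sigma))(p \otimes q)$, and since elements of the form $p \otimes q$ with $p,q$ homogeneous span $[H^2_d]_n \otimes [H^2_d]_m$, linearity gives $\pi_{n+m}(\sigma) = \pi_n(\sigma) \otimes \pi_m(\sigma)$ as operators, i.e. $\pi_{n+m} = \pi_n \otimes \pi_m$ as representations of $G$.

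**The only subtlety** — not really an obstacle — is making sure the spanning claim is airtight: the monomials $X^I$ with $|I| = n$ form a basis of $[H^2_d]_n$, the monomials $X^J$ with $|J| = m$ form a basis of $[H^2_d]_m$, and under the grading identification $X^I \otimes X^J \mapsto X^{IJ}$ the products $X^{IJ}$ range over a basis of $[H^2_d]_{n+m}$; since $\pi_{n+m}$ and $\pi_n \otimes \pi_m$ are linear and agree on this spanning set of elementary tensors, they agree everywhere. I would also remark that the identity is natural in $G$, i.e. it holds simultaneously for all $\sigma$, so it is genuinely an isomorphism of $G$-representations and not merely of vector spaces. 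This completes the proof.
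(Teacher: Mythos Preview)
Your argument is correct and matches the paper's approach exactly: the paper simply records the multiplicativity identity $\pi_{n+m}\cdot pq = (\pi_n\cdot p)(\pi_m\cdot q)$ immediately before the observation and then states that the grading lemma translates this into $\pi_n\otimes\pi_m=\pi_{n+m}$, which is precisely what you unpack (with the added detail that simple tensors span).
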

Observation \ref{obs} implies that the action on homogeneous polynomials of degree $n$ is determined by the action on homogeneous polynomials of degree $1$.
	\begin{corollary}\label{tensorpowerrep}
		Let $\pi$ be a unitary representation of a finite group $G.$
		Then,
		$$\pi_n = \pi^{\otimes n}$$
		with the identification made in the grading lemma.
	\end{corollary}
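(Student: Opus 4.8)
The plan is to derive this as an immediate induction on $n$ from Observation \ref{obs}. The base case is the identification $\pi_1 = \pi$ built into the definition of $\pi_n$, which reads $\pi_1 = \pi = \pi^{\otimes 1}$. For the inductive step, suppose $\pi_n = \pi^{\otimes n}$ under the grading identification $\flattensor{[H^2_d]_n}{[H^2_d]_1} = [H^2_d]_{n+1}$. Applying Observation \ref{obs} with $m = 1$ gives $\pi_{n+1} = \flattensor{\pi_n}{\pi_1}$, and substituting the inductive hypothesis and $\pi_1 = \pi$ yields $\pi_{n+1} = \flattensor{\pi^{\otimes n}}{\pi} = \pi^{\otimes(n+1)}$, as desired.

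First I would note that Observation \ref{obs} itself is exactly the statement that the action is multiplicative across the grading, which is in turn just the formal shadow (via the grading lemma) of the elementary identity $\pi_{n+m}\cdot pq = (\pi_n \cdot p)(\pi_m \cdot q)$ recorded just above the observation; so no new content is needed beyond invoking it. I would then spell out the two-line induction above. The only point deserving a word of care is that the identification $[H^2_d]_{n+m} = \flattensor{[H^2_d]_n}{[H^2_d]_m}$ is associative in the appropriate sense, so that the iterated tensor power $\pi^{\otimes n}$ is well defined without worrying about the order of bracketing; this is immediate from the fact that the grading lemma's identification comes from concatenation of multi-indices, which is associative.

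There is essentially no obstacle here — the content is entirely front-loaded into the grading lemma and Observation \ref{obs}, and the corollary is a bookkeeping consequence. If anything, the only thing to be mildly careful about is the degenerate case $n = 0$: one should either start the induction at $n = 1$, or adopt the convention that $\pi_0$ is the trivial one-dimensional representation on the scalars $[H^2_d]_0 = \mathbb{C}$, which is the empty tensor power $\pi^{\otimes 0}$. Either way the statement for $n \geq 1$, which is all that is used subsequently, follows directly.
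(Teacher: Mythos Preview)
Your argument is correct and is exactly the intended derivation: the paper states this as a corollary of Observation \ref{obs} with no written proof, and the obvious induction on $n$ using $\pi_{n+1}=\pi_n\otimes\pi_1$ together with the identification $\pi_1=\pi$ is precisely what is being left implicit. Your remarks on associativity and the $n=0$ case are fine but more than the paper bothers with.
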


We can now prove that the ring of invariant free polynomials is superorthogonal by
constructing a basis for it. 

\begin{theorem}\label{freeringortho}
Let $G$ be a finite group. Let $\pi: G \to U_d$ be a group representation. The ring of invariant free polynomials for $\pi$ is superorthogonal. 
\end{theorem}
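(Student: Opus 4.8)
The plan is to mimic the explicit construction carried out for the cyclic group $\mathbb{Z}_3$ in Section~\ref{cyclic3example}, but organized through the representation theory of $G$ rather than by hand. By Corollary~\ref{tensorpowerrep} the action $\pi_n$ of $G$ on $[H^2_d]_n$ is the tensor power $\pi^{\otimes n}$, and the inner product on each $[H^2_d]_n$ is $G$-invariant since $\pi$ is unitary. Hence each $[H^2_d]_n$ decomposes orthogonally into isotypic components, and in particular splits as
$$[H^2_d]_n = V_n \oplus W_n,$$
where $V_n$ is the subspace of $G$-invariant vectors (the trivial isotypic component) and $W_n = V_n^\perp$ is the sum of the nontrivial isotypic components. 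The ring of invariant free polynomials $\mathcal{A}$ is exactly $\bigoplus_n V_n$. The superorthonormal basis will be built degree by degree: at each stage I pick an orthonormal basis of the ``new'' invariants, namely those not already obtainable as products of lower-degree invariants.

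First I would set up the ideal bookkeeping exactly as in Lemma~\ref{basismap}: let $\mathcal{I}$ be the left ideal in $\mathbb{C}\langle x_1,\dots,x_d\rangle$ generated by $\mathcal{A}\setminus\{1\}$, and for each $n$ consider the finite-dimensional space $(\mathcal{I}\cap [H^2_d]_n)^\perp$ inside $[H^2_d]_n$. The words in the candidate basis elements that have degree exactly $n$ should span a complement, inside $V_n$, of $V_n \cap \mathcal{I}$; so I choose $u_\lambda$ of degree $n$ to be an orthonormal basis of the orthogonal complement of $(V_n \cap \mathcal{I})$ within $V_n$. The key structural point, which is the analogue of ``$\prod_j u_{i_j}$ is invariant iff $\sum_j i_j \equiv_3 0$'' in the $\mathbb{Z}_3$ example, is that products of these $u_\lambda$ land in the trivial isotypic component precisely when the corresponding tensor product of isotypic pieces contains the trivial representation, and by the grading lemma the inner product of two distinct such products factors as a product of inner products of homogeneous pieces of matching degrees.

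The heart of the argument — and the step I expect to be the main obstacle — is verifying the superorthogonality condition $u_\lambda H^2_d \perp u_\mu H^2_d$ for $\lambda \neq \mu$, i.e. that the chosen basis is genuinely superorthonormal and not merely orthonormal. Using the grading lemma, $\ip{u_\lambda r}{u_\mu s} = \ip{u_\lambda}{u_\mu}\ip{r}{s}$ whenever $r,s$ have equal degree, so if $u_\lambda, u_\mu$ have the \emph{same} degree this is immediate from orthonormality. The real content is when $\deg u_\lambda \neq \deg u_\mu$, say $\deg u_\lambda < \deg u_\mu$: then $u_\mu \in u_\mu H^2_d$ must be shown orthogonal to $u_\lambda H^2_d$, which amounts to showing $u_\mu \perp u_\lambda \cdot [H^2_d]_{\deg u_\mu - \deg u_\lambda}$. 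But $u_\lambda \cdot [H^2_d]_{\deg u_\mu - \deg u_\lambda}$ is contained in the subspace $\mathcal{I}\cap [H^2_d]_{\deg u_\mu}$ (it is a product of something in $\mathcal{A}\setminus\{1\}$ with an arbitrary polynomial), while $u_\mu$ was chosen in the orthogonal complement of $(V_{\deg u_\mu}\cap \mathcal{I})$ inside $V_{\deg u_\mu}$, hence $u_\mu \perp \mathcal{I}\cap[H^2_d]_{\deg u_\mu}$. This gives the orthogonality.

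Finally I would check that the $(u_\lambda)$ so constructed actually generate $\mathcal{A}$ as an algebra — this is the inductive claim that $V_n$ is spanned by the degree-$n$ $u_\lambda$ together with products of lower-degree $u_\lambda$'s, which follows since at each degree we adjoin an orthonormal basis of the complement of $V_n\cap\mathcal{I}$ in $V_n$, and $V_n\cap\mathcal{I}$ is by definition spanned by such products — and that they form a minimal generating set, which follows from the linear independence of words in the $u_\lambda$ established as in Lemma~\ref{sofree} once superorthogonality is known. Together these show $\mathcal{A}$ admits a superorthonormal basis, i.e. $\mathcal{A}$ is superorthogonal, completing the proof; Theorem~\ref{mainresult} then follows by Theorem~\ref{mainresultso}.
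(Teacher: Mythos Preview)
Your overall strategy is right, and once repaired it matches the paper's construction. The gap is in your use of $\mathcal{I}$. With $\mathcal{I}$ defined as the left ideal generated by $\mathcal{A}\setminus\{1\}$ in $\mathbb{C}\langle x_1,\dots,x_d\rangle$, every nonconstant homogeneous invariant already lies in $\mathcal{I}$: for $n\ge 1$ and $a\in V_n$ one has $a=a\cdot 1$ with $a\in\mathcal{A}\setminus\{1\}$, so $V_n\subseteq\mathcal{I}$ and $V_n\cap\mathcal{I}=V_n$. The ``orthogonal complement of $V_n\cap\mathcal{I}$ within $V_n$'' is therefore $\{0\}$ for every $n\ge 1$, and your procedure produces no generators at all. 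The same slip breaks the superorthogonality step: you want $u_\mu\perp\mathcal{I}\cap[H^2_d]_{\deg u_\mu}$, but $u_\mu$ itself sits in that space. Your later sentence ``$V_n\cap\mathcal{I}$ is by definition spanned by such products'' reveals what you actually mean: not $\mathcal{I}$, but the \emph{decomposable} invariants $D_n=\sum_{0<k<n}V_kV_{n-k}$, i.e.\ the degree-$n$ part of $\mathcal{A}_+\mathcal{A}_+$ rather than of $\mathcal{A}_+\cdot\mathbb{C}\langle x\rangle$.

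With $D_n$ in place of $V_n\cap\mathcal{I}$ the argument goes through. Choose the degree-$n$ generators as an orthonormal basis of $V_n\ominus D_n$. For $\deg u_\lambda=k<n=\deg u_\mu$, left multiplication by $u_\lambda$ is $G$-equivariant, so $u_\lambda\cdot[H^2_d]_{n-k}=u_\lambda V_{n-k}+u_\lambda W_{n-k}\subseteq D_n+W_n$; since $u_\mu\in V_n\ominus D_n$ is orthogonal to both summands, $u_\mu\perp u_\lambda[H^2_d]_{n-k}$, and the grading lemma upgrades this to $u_\mu H^2_d\perp u_\lambda H^2_d$. This corrected construction coincides with the paper's: the paper builds spaces recursively by $P_1=[H^2_d]_1$, $P_{i+1}=(P_i\ominus\tilde P_i)\otimes[H^2_d]_1$ with $\tilde P_i$ the $G$-fixed part of $P_i$, and takes generators from $\tilde P_i$. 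One checks $P_n^\perp=\bigoplus_{j<n}\tilde P_j\otimes[H^2_d]_{n-j}$, which is exactly the degree-$n$ piece of the ideal generated by lower-degree invariants, so $\tilde P_n=V_n\cap P_n=V_n\ominus D_n$ and the two constructions select the same generator spaces.
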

\begin{proof}
The action of the group on homogeneous polynomials of degree $i$ is given by $\pi^{\otimes i}(g) = \pi_i(g)$. To show that the ring of invariant free polynomials is superorthogonal, we construct a superorthogonal basis recursively.

	Let $V_1 = [H^2_d]_1$, and if $i>1$, let
	$$V_{i} = \tilde{V}_{i-1}^\perp \otimes [H^2_d]_1,$$
	where $\tilde{V}_{i-1}$ is defined as follows. For each $i$, consider
	$\tilde{\pi}_i = \pi_i|_{V_i}$
and decompose 
	$V_i = \tilde{V_i} \oplus \tilde{V_i}^\perp,$
where $\tilde{V_i}$ is the space which is fixed by the action of $\tilde{\pi_i}.$

Let $(u_{i,k})$ be an orthonormal basis of $V_i$. We will show that for all $p,q$ in the ring of invariant free polynomials,
	$$u_{i,k} p \perp u_{j,l}q$$
for $j \leq i$, i.e. that $u_{i,k}$ and $u_{j,l}$ are superorthogonal.

When $i = j$, for $l\neq k$, by the grading lemma,
	\begin{align*}
		\ip{u_{i,k}p}{u_{j,l}q} &= \ip{u_{i,k}}{u_{j,l}}\ip{p}{q} \\
		&= 0.
	\end{align*}
When $j < i$, 
by the recursive construction, $u_{i,k} \in \tilde{V_j}^\perp \otimes [H^2_d]_1^{\otimes a}$ and $u_{j,l} \in \tilde{V_j} \otimes [H^2_d]_1^{\otimes b}$ for some $a, b$,
and therefore
	$$\ip{u_{i,k}p}{u_{j,l}q} = 0.$$
	
To show that our recursively generated sequence $(u_{i,k})$ is a basis, consider the following. If $p \in [H^2_d]_n$ is in the ring of invariant free polynomials, note that the projection of $p$ onto any $u_{i,k}[H^2_d]_{n-i}$ is in the ring of invariant free polynomials. 	
So for any $p$ in the ring of invariant free polynomials, 
	$$p = \sum u_{i,k}q_{i,k} + r,$$
where $q_{i,k}$ is in the ring of invariant free polynomials, and $r \in \tilde{V_n}^\perp$ and in the ring of invariants.
So, by construction, $r = 0$.
\end{proof}

\section{Structure of the superorthogonal basis for the ring of invariants}


\subsection{Counting}\label{counting}
We now calculate the number of elements in any superorthonormal basis
for the ring of invariant free polynomials of a given degree in terms of generating functions.
The main result of this section is as follows.
\begin{theorem} \label{countingTheorem}
	Let $\pi$ be a unitary representation of a group $G$ and $\chi= \tr{\pi}$ be the character corresponding to $\pi.$
	Let $\mathcal{C}_G$ be a set of representatives for the conjugacy classes of $G.$
	Let $g_{n}$ be the number of free polynomials of degree $n$
	in some superorthogonal basis for the ring of invariant free polynomials
	of $\pi.$
	Then,
	$$ g(z) =\sum_{n=0}^{\infty}g_{n}z^{n} = 1-|G|\frac{\prod_{\sigma \in \mathcal C_G}
	(1-\chi(\sigma)z)}{\sum_{\tau \in \mathcal C_G} \#C_\tau\prod_{\sigma \neq \tau}(1-\chi(\sigma)z)}$$
	where $\#C_\tau$ denotes the number of elements in the conjugacy class of $\tau.$
	
	Namely, the number of generators of a given degree is independent of the choice of superorthogonal basis.
\end{theorem}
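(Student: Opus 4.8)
The plan is to follow the dimensions $g_n=\dim\tilde V_n$ through the recursive construction in the proof of Theorem~\ref{freeringortho} using ordinary character theory, repackage the resulting recursion as an identity of generating functions, and then simplify it by partial fractions.

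First I set up the bookkeeping. For each $i$ write $\psi_i$ for the character of the $G$-representation carried by $\tilde V_i^{\perp}$; by Corollary~\ref{tensorpowerrep} the space $\tilde V_i^{\perp}$ is a subrepresentation of $\pi_i=\pi^{\otimes i}$, so $\psi_i$ is an honest character. I adopt the convention $\tilde V_0^{\perp}=[H^2_d]_0=\C$ with trivial action, so $\psi_0\equiv 1$ and $V_1=\tilde V_0^{\perp}\otimes[H^2_d]_1=[H^2_d]_1$, and I set $g_0=0$. Since $V_{i+1}=\tilde V_i^{\perp}\otimes[H^2_d]_1$ and tensoring multiplies characters, the representation on $V_{i+1}$ has character $\psi_i\chi$; the subspace $\tilde V_{i+1}$ is its trivial-isotypic part, so $g_{i+1}$ is the multiplicity of the trivial representation, $g_{i+1}=\frac1{|G|}\sum_{h\in G}\psi_i(h)\chi(h)$, while $\tilde V_{i+1}^{\perp}$ carries the complementary representation, giving $\psi_{i+1}=\psi_i\chi-g_{i+1}\cdot\mathbf 1$, where $\mathbf 1$ denotes the trivial character.

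Next I pass to generating functions. Fix $h\in G$ and put $\Psi(h,z)=\sum_{i\ge 0}\psi_i(h)z^i$. The recursion $\psi_{i+1}(h)=\chi(h)\psi_i(h)-g_{i+1}$ together with $\psi_0(h)=1$ and $g_0=0$ gives $\Psi(h,z)=1+\chi(h)z\,\Psi(h,z)-g(z)$, hence $\Psi(h,z)=(1-g(z))/(1-\chi(h)z)$. Multiplying $g_{i+1}=\frac1{|G|}\sum_h\psi_i(h)\chi(h)$ by $z^{i+1}$, summing over $i\ge 0$, and using that $\chi$ is a class function,
$$g(z)=\frac{z}{|G|}\sum_{h\in G}\chi(h)\,\Psi(h,z)=\frac{z\,(1-g(z))}{|G|}\sum_{\sigma\in\mathcal C_G}\#C_\sigma\,\frac{\chi(\sigma)}{1-\chi(\sigma)z}.$$
This is a linear equation in $g(z)$; writing $S(z)=\sum_{\sigma\in\mathcal C_G}\#C_\sigma\chi(\sigma)/(1-\chi(\sigma)z)$ it solves to $g(z)=zS(z)/(|G|+zS(z))$. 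Finally, with $P(z)=\prod_{\sigma\in\mathcal C_G}(1-\chi(\sigma)z)$ and $Q(z)=\sum_{\tau\in\mathcal C_G}\#C_\tau\prod_{\sigma\ne\tau}(1-\chi(\sigma)z)$, the identity $\chi(\sigma)z/(1-\chi(\sigma)z)=1/(1-\chi(\sigma)z)-1$ together with $\sum_{\sigma\in\mathcal C_G}\#C_\sigma=|G|$ gives $zS(z)=Q(z)/P(z)-|G|$, hence $|G|+zS(z)=Q(z)/P(z)$ and therefore $g(z)=1-|G|P(z)/Q(z)$, which is the asserted formula.

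It remains to see that $g_n$ is independent of the chosen superorthogonal basis. Since $\mathcal A$ is a graded subalgebra of $\C\langle x_1,\dots,x_d\rangle$ that is free by Lemma~\ref{sofree}, any minimal homogeneous generating set has, in each degree $n$, a number of elements equal to the dimension of the degree-$n$ component of the space of indecomposables $\mathcal A_{\ge 1}/(\mathcal A_{\ge 1}\cdot\mathcal A_{\ge 1})$, which is intrinsic to $\mathcal A$; in the construction above $\tilde V_n$ maps isomorphically onto this component, so $g_n=\dim\tilde V_n$ does not depend on the basis. I expect the main obstacle to be the clean justification of the recursion for $\psi_{i+1}$ — namely that the $G$-action on $V_{i+1}$ is exactly the action on $\tilde V_i^{\perp}$ tensored with $\pi$ under the grading identification of the grading lemma, and that $\tilde V_{i+1}$ is precisely its trivial isotypic summand — together with keeping the base case and the constant term $g(0)=0$ consistent; the partial-fraction simplification at the end is routine.
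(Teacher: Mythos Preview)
Your argument is correct. It differs from the paper's proof in organization rather than in substance, and it is worth recording the comparison.

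The paper introduces $f_n=\dim\,[\text{degree-}n\ \text{invariants}]$ and proceeds in two clean steps: first, freeness of $\mathcal A$ (Lemma~\ref{sofree}) gives the combinatorial identity $f(z)=1/(1-g(z))$, equivalently $g(z)=1-1/f(z)$; second, $f_n=\langle\chi^n,\tau\rangle$ yields $f(z)=\frac{1}{|G|}\sum_{\sigma\in\mathcal C_G}\#C_\sigma/(1-\chi(\sigma)z)$, and substituting gives the stated formula. Independence of the basis falls out because $g$ is determined by $f$, which is intrinsic.

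You instead follow the characters $\psi_i$ of the complements $\tilde V_i^\perp$ through the recursion of Theorem~\ref{freeringortho}, obtain a first-order linear equation for $g(z)$, and solve it directly, never introducing $f$. This is a bit longer but has a pleasant byproduct: your identity $\Psi(h,z)=(1-g(z))/(1-\chi(h)z)$ is a pointwise-in-$G$ refinement, and averaging it over $G$ recovers exactly the paper's relation $f(z)=1/(1-g(z))$ (since $\langle\psi_i,\tau\rangle=0$ for $i\ge 1$). Your indecomposables argument for basis-independence is essentially the same content as the paper's implicit one.
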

We prove Theorem \ref{countingTheorem} in Section \ref{countingTheoremProof}.

For example, consider
symmetric functions in three variables.
That is, take the group $S_3$ acting on $\mathbb{C}^3$ via the representation $\pi(\sigma)e_i = e_{\sigma(i)}.$
According to Theorem \ref{countingTheorem}, the necessary information can be conveniently compiled in the following table.
\begin{center}
\begin{tabular}{ c | c | c | c}
       & $e$ & (1 2) & (1 2 3) \\ \hline
  $\chi$ & 3 & 1 & 0 \\
  $\#C_\sigma$ & 1 & 3 & 2 \\
\end{tabular}
\end{center}
So,
	\begin{align*} g(z)& = 1- 6\frac{(1-3z)(1-z)(1 - 0z)}
	{(1-z)(1-0z)+ 3(1-3z)(1-0z)+ 2(1-z)(1 - 3z)}, \\
	&=\frac{z-2z^{2}}{1-3z+z^2}, \\
	&=z+z^2+2z^2+5z^3+13z^4+..., \\
	&=z+\sum_{n=2}^{\infty}F_{2n-3}z^{n},
	\end{align*}
where $F_n$ denotes the $n$-th Fibonacci number.

Thus, in general, with the help of computer algebra software, it is a simple exercise to calculate the number of free polynomials of degree $n$
	in some superorthogonal basis for the ring of invariant free polynomials
	of $\pi.$

\subsubsection{The proof of Theorem \ref{countingTheorem}}\label{countingTheoremProof}
Let $f_{n}$ be the dimension of invariant homogeneous free polynomials of degree $n.$
Let $g_{n}$ be the number of free polynomials of degree $n$ in some superorthogonal basis for the ring of invariant free polynomials
of $\pi.$
Let
	$$
		g(z) =\sum_{n=0}^{\infty}g_{n}z^{n}, 
		f(z) =\sum_{n=0}^{\infty}f_{n}z^{n}. 
	$$
\begin{lemma} \label{gfrelation}
		$$g(z)=\frac{f(z)-1}{f(z)}.$$
\end{lemma}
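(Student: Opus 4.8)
The plan is to exploit the fact, established in Lemma \ref{sofree} and Theorem \ref{freeringortho}, that the ring of invariant free polynomials $\mathcal{A}$ is free on a superorthogonal basis $(u_\lambda)$, with exactly $g_n$ generators in degree $n$. Since $\mathcal{A}$ is free, every element of $\mathcal{A}$ has a unique expression as a (finite) linear combination of words in the generators, and the degree of a word $u_{\lambda_1}\cdots u_{\lambda_k}$ is the sum of the degrees $\deg u_{\lambda_i}$ (the generators are homogeneous). Thus the homogeneous component of $\mathcal{A}$ of degree $n$ has a basis indexed by sequences $(\lambda_1,\dots,\lambda_k)$ with $\sum_i \deg u_{\lambda_i} = n$, which is a purely combinatorial enumeration once one knows how many generators sit in each degree.

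The key step is to translate this into a generating function identity. If we let $G(z) = \sum_{n\ge 1} g_n z^n$ be the generating function counting generators (note there is no degree-zero generator, since constants are not part of a minimal generating set and $g_0 = 0$), then the number of words of total degree $n$, allowing the empty word in degree $0$, has generating function
$$
\sum_{k=0}^\infty G(z)^k = \frac{1}{1 - G(z)},
$$
because a word of length $k$ is an ordered $k$-tuple of generators and the degrees add. But the number of words of total degree $n$ is exactly $f_n = \dim [\mathcal{A}]_n$, since the words of a given degree form a basis for the homogeneous component of that degree in the free algebra $\mathcal{A}$. Hence $f(z) = \frac{1}{1-G(z)}$. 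Observing that $G(z) = g(z) - g_0 = g(z) - 1$ — wait, here one must be careful about the indexing convention: in the statement $g(z) = \sum_{n=0}^\infty g_n z^n$ includes an $n=0$ term. The cleanest route is to note $g_0$ counts degree-zero generators, of which there are none in a minimal basis, so $g(z) = G(z)$ and $g(z)$ has zero constant term; alternatively if one adopts the convention that the "empty generator" (the unit) is formally included one adjusts by $1$. Taking $g(z) = G(z)$ with $g_0 = 0$, we get $f(z) = \frac{1}{1-g(z)}$, which rearranges to $g(z) = 1 - \frac{1}{f(z)} = \frac{f(z)-1}{f(z)}$, as claimed.

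The main obstacle — really the only subtle point — is making the "degrees add under concatenation, and words form a basis" argument rigorous, i.e. justifying that $f(z) = 1/(1-g(z))$. This rests on two facts already available: first, that $\mathcal{A}$ is a free algebra on the homogeneous generators $(u_\lambda)$ (Lemma \ref{sofree}), so distinct words are linearly independent and span; and second, that since each $u_\lambda$ is homogeneous, the word-length grading refines into the polynomial-degree grading additively. Given these, the formal power series identity $f(z) = \sum_k g(z)^k$ is just the statement that choosing a basis word of degree $n$ amounts to choosing an ordered tuple of generators whose degrees sum to $n$; convergence is not an issue since we work with formal power series and $g_n, f_n$ are finite for each $n$ (both spaces being subspaces of the finite-dimensional $[H^2_d]_n$). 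I would present this as a short direct computation after recalling the freeness, rather than belaboring the combinatorics.
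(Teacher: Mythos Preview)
Your argument is correct and is essentially the same as the paper's: both establish $f(z)=\dfrac{1}{1-g(z)}$ by counting words in the generators, then rearrange. You are more explicit than the paper in invoking freeness (Lemma~\ref{sofree}) to justify that words of degree~$n$ form a basis for the degree-$n$ component; the paper simply asserts $f_n=\sum_{i_1+\cdots+i_k=n}\prod_j g_{i_j}$ ``using enumerative combinatorics.'' Your discussion of the $g_0=0$ point is fine but should be cleaned up --- the mid-sentence ``wait'' and the aborted claim $G(z)=g(z)-1$ should be removed in a final write-up.
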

\begin{proof}
	It can be shown using enumerative combinatorics that 
		$$f_n = \sum_{i_1+\ldots+i_k =n} \prod^k_{1} g_{i_j}.$$

	Thus,
	\begin{align*}\frac{1}{1-g(z)} &=\frac{1}{1-\sum_{n=0}^{\infty}g_{n}z^{n}} \\
	&=\sum_{m=0}^{\infty}\left(\sum_{n=0}^{\infty}g_{n}z^{n}\right)^{m} \\
	&=\sum_{l=0}^{\infty}\sum_{i_1+\ldots+i_k =n} \prod^k_{1} g_{i_j} z^{l} \\
	&=\sum_{n=0}^{\infty}f_{n}z^{n} \\&=f(z).\end{align*}
	
	Calculating $g$ from $f$ gives that
		$$g(z)=\frac{f(z)-1}{f(z)}.$$
\end{proof}


In order to calculate $f_n$, we use character theory (see Serre \cite[Chapter 2]{serreRep}).

Let $\rho$ be a representation for $G$. For each $\sigma\in G$, put
$\chi_{\rho}(g)=\tr (\rho(\sigma)).$ $\chi_{\rho}$ is the \dfn{character} of $\rho.$ 
Let $\phi_{1}, \phi_{2}$ be functions from $G$ into $\mathbb{C}.$ The scalar product $\ip{\cdot}{\cdot}$ is defined by
$$\ip{\phi_{1}}{\phi_{2}}=\frac{1}{|G|}\sum_{\sigma\in G} \phi_{1}(\sigma)\overline{\phi_{2}(\sigma)}.$$
Let $\tau:G\rightarrow\mathcal{U}_{1}$ be the trivial representation. 
The dimension of the space of fixed vectors of the action of $\rho$ is given by $\ip{\chi_{\rho}}{\tau}.$
(See Serre \cite[Chapter 2]{serreRep})

We make the identification $\chi = \chi_{\pi}.$

\black
\begin{lemma}\label{fgf}
	$$f(z) =\frac{1}{|G|}\sum_{\sigma\in G}\frac{1}{1-\chi(\sigma)z}.$$
\end{lemma}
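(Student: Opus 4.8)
The plan is to compute $f_n$, the dimension of the space of invariant homogeneous free polynomials of degree $n$, by the standard character-theoretic formula and then assemble the generating function. By Corollary \ref{tensorpowerrep}, the action of $G$ on $[H^2_d]_n$ is the representation $\pi^{\otimes n}$, so $f_n$ is the dimension of the fixed subspace of $\pi^{\otimes n}$. By the formula quoted from Serre (the dimension of the fixed subspace of a representation $\rho$ equals $\ip{\chi_\rho}{\tau} = \tfrac{1}{|G|}\sum_{\sigma\in G}\chi_\rho(\sigma)$), this gives
$$f_n = \frac{1}{|G|}\sum_{\sigma\in G}\chi_{\pi^{\otimes n}}(\sigma).$$

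Next I would use the multiplicativity of characters under tensor products: $\chi_{\pi^{\otimes n}}(\sigma) = \big(\chi_\pi(\sigma)\big)^n = \chi(\sigma)^n$, valid for each fixed $\sigma$, including the edge case $n=0$ where $\pi^{\otimes 0}$ is the trivial one-dimensional representation and $\chi(\sigma)^0 = 1$, so $f_0 = 1$ as expected. Substituting into the definition of $f(z)$ and interchanging the (finite) sum over $G$ with the sum over $n$:
$$f(z) = \sum_{n=0}^\infty f_n z^n = \sum_{n=0}^\infty \frac{1}{|G|}\sum_{\sigma\in G}\chi(\sigma)^n z^n = \frac{1}{|G|}\sum_{\sigma\in G}\sum_{n=0}^\infty \big(\chi(\sigma)z\big)^n = \frac{1}{|G|}\sum_{\sigma\in G}\frac{1}{1-\chi(\sigma)z},$$
where the geometric series is summed formally (or, if one prefers an analytic statement, for $|z|$ small enough that $|\chi(\sigma)z|<1$ for all $\sigma$, which is possible since $G$ is finite and each $|\chi(\sigma)|\le d$).

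There is essentially no obstacle here; the only point requiring a word of care is justifying the interchange of summations, which is immediate because the sum over $\sigma \in G$ is finite, so this is just a finite sum of geometric series and the identity holds coefficientwise as an identity of formal power series. The substantive input — that $f_n$ counts fixed vectors of $\pi^{\otimes n}$ — has already been established via Corollary \ref{tensorpowerrep} and the grading lemma, so this lemma is a short formal computation combining that with Maschke/character theory.
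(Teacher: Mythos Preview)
Your proposal is correct and follows essentially the same argument as the paper: invoke Corollary \ref{tensorpowerrep} to identify the action on degree-$n$ polynomials with $\pi^{\otimes n}$, use the character formula for the dimension of the fixed subspace to get $f_n = \tfrac{1}{|G|}\sum_{\sigma}\chi(\sigma)^n$, and sum the resulting geometric series. The only minor addition in the paper is a final rewriting of the sum over $G$ as a sum over conjugacy class representatives weighted by class size, which you could append in one line.
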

\begin{proof}
	By Lemma \ref{tensorpowerrep}, the action of $\pi$ on homogenous polynomials is given by $\pi^{\otimes n}$. 
	\begin{align*}
	f(z) & =\sum_{n=0}^{\infty}f_{n}z^{n}
	\\
	& =\sum_{n=0}^{\infty}\ip{\chi^{n}}{\tau}z^{n}
	\\
	& =\frac{1}{|G|}\sum_{n=0}^{\infty}\sum_{\sigma\in G}\chi(\sigma)^{n}\overline{\tau(\sigma)}z^{n}
	\\
	& =\frac{1}{|G|}\sum_{n=0}^{\infty}\sum_{\sigma\in G}(\chi(\sigma)z)^{n}
	\\
	& =\frac{1}{|G|}\sum_{\sigma\in G}(1-\chi(\sigma)z)^{-1}. \\
	& =\frac{1}{|G|}\sum_{\sigma\in \mathcal{C}_G}\frac{\#C_\sigma}{1-\chi(\sigma)z}.
	\end{align*}
\end{proof}
Now Theorem \ref{countingTheorem} follows as an immediate corollary of Lemma \ref{gfrelation} and Lemma \ref{fgf}.

\subsection{The ring of invariants of a finite abelian group}\label{abelian}


\begin{theorem}\label{abelianbasistheorem}
Let $G$ be a locally compact abelian group, and let $\pi:G \to U_d$ be a unitary representation of $G$. Let $v_1, \ldots, v_d$ be an orthonormal set of vectors of $[H^2_d]_1$ with corresponding characters $\chi_1, \ldots, \chi_d$ such that $\pi(g)v_i = \chi_i(g) v_i$. A basis for the free ring of invariants as a vector space is given by
	$$B = \left\{ v^J | \chi^J = \tau \right\}.$$
Furthermore,
	$$\tilde{B} = \left\{ v^I \in B | \forall v^J, v^K \neq 1 \in B : v^I \neq v^Jv^K\right\}$$
forms a superorthonormal basis.
\end{theorem}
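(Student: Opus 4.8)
The plan is to prove the theorem in two stages, mirroring the structure of the statement. First I would establish that $B = \{v^J : \chi^J = \tau\}$ is a vector-space basis for the ring of invariants; then I would show that the primitive sub-family $\tilde B$ is a superorthonormal basis in the sense of the definition in Section~4. The whole argument is a direct abelian refinement of the cyclic-group-of-order-three computation in Section~\ref{cyclic3example}, so the main work is bookkeeping with characters rather than any genuinely new idea.

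For the first stage, note that since $G$ is abelian the representation $\pi$ on $[H^2_d]_1$ decomposes into one-dimensional sub-representations, so the hypothesis that there is an orthonormal eigenbasis $v_1,\dots,v_d$ with $\pi(g)v_i = \chi_i(g)v_i$ is automatic. By Corollary~\ref{tensorpowerrep} the action on $[H^2_d]_n$ is $\pi^{\otimes n}$, and under the grading-lemma identification the monomials $v^J$ with $|J| = n$ form an orthonormal basis of $[H^2_d]_n$ that diagonalizes $\pi_n$: indeed $\pi_n(g) v^J = \chi^J(g) v^J$ where $\chi^J = \prod_j \chi_{j_\ell}$ over the letters $j_\ell$ appearing in the word $J$. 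A homogeneous free polynomial $p = \sum_J c_J v^J$ is invariant if and only if $\chi^J(g) c_J = c_J$ for all $g$, i.e. $c_J = 0$ whenever $\chi^J \neq \tau$; hence the invariant homogeneous polynomials of each degree are exactly the span of $\{v^J : \chi^J = \tau\}$, and collecting over all degrees shows $B$ spans the ring of invariants. Linear independence is inherited from the orthonormality of the $v^J$ in $H^2_d$. One must also check $B$ generates the ring \emph{as an algebra} — but this is immediate since it already spans it as a vector space, and in fact the multiplicativity $\chi^{IJ} = \chi^I \chi^J$ shows $B$ is closed under the products that arise.

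For the second stage, I would first observe that $\tilde B$ still spans the ring of invariants as an algebra: given any $v^I \in B$, either $v^I \in \tilde B$, or $v^I = v^J v^K$ with $v^J, v^K \neq 1$ in $B$, and iterating (the word length strictly drops) factors $v^I$ into a product of elements of $\tilde B$; since $B$ spans the ring as a vector space, $\tilde B$ generates it as an algebra. The minimality required of a basis follows because, by the grading lemma, distinct words in the $\tilde B$-letters are orthogonal (see below), hence linearly independent, so no proper subfamily can generate. Finally, superorthogonality: let $v^I, v^{I'} \in \tilde B$ (allowing $I = I'$) and take arbitrary invariant $p, q$; I must show $v^I p \perp v^{I'} q$ when $(v^I, p) \neq (v^{I'}, q)$ as elements of $v^I H^2_d$ versus $v^{I'} H^2_d$ — more precisely the definition asks $v^I H^2_d \perp v^{I'} H^2_d$ for $I \neq I'$. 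Here I would use the grading lemma repeatedly: any element of $v^I H^2_d$ of a given degree is a combination of monomials $v^{IM}$ and any element of $v^{I'} H^2_d$ a combination of $v^{I'M'}$, and $\ip{v^{IM}}{v^{I'M'}} = 0$ whenever the words $IM$ and $I'M'$ differ, which they do since $I \neq I'$ forces a discrepancy in the first $\max(|I|,|I'|)$ letters (using that neither $v^I$ nor $v^{I'}$ is a prefix-product of the other — this is exactly the content of primitivity, that no proper initial sub-product lands back in $B$). The main obstacle, and the step I would be most careful about, is this last prefix/primitivity argument: one needs that for $v^I, v^{I'} \in \tilde B$ distinct, neither word is an initial segment of the other, which requires showing that if $I'$ were a prefix of $I$ then the complementary suffix would itself have $\chi$-product equal to $\tau$ (since $\chi^I = \chi^{I'} = \tau$ and characters multiply), contradicting that $v^{I'}$, together with that suffix, would exhibit $v^I$ as a product of two nontrivial elements of $B$ — hence $v^I \notin \tilde B$. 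Once that is pinned down, the orthogonality reduces to the grading lemma exactly as in the $\mathbb{Z}/3$ example, and we are done.
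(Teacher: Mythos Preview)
Your proposal is correct and follows essentially the same route as the paper: diagonalize $\pi_n$ on the monomial basis $v^J$ via the characters $\chi^J$, read off that $B$ spans the invariants, and then invoke the grading lemma to get the required orthogonality for $\tilde B$. If anything you are more explicit than the paper, which compresses the superorthogonality step to a single line; your prefix argument (that distinct primitives cannot be initial segments of one another because the complementary suffix would again lie in $B$) is exactly the point the paper is tacitly using when it says the orthogonality of distinct words ``implies the claim.''
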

\begin{proof}
	Note that $\pi^n v^J = (\chi v)^J = \chi^J v^J.$
	So,
		$$\frac{1}{|G|}\sum_{g\in G} \chi^J(g) v^J = \ip{\chi^J}{\tau}v^J.$$
	So, any invariant free polynomial is in the span of the $v^J\in B,$
	which are themselves invariant homogeneous free polynomials, since otherwise 
	$\ip{\chi^J}{ \tau} = 0$ by the orthogonality of characters.
	
	To show that $\tilde{B}$ is superorthogonal, note
	for any two distinct words,
		$$\ip{v^K}{v^L} =  0$$
	by the grading lemma, which implies the claim.

\end{proof}
So, the calculation of a superorthogonal basis for the ring of invariants of a finite abelian group
is tractable in general.

The superorthogonal basis given in Theorem \ref{abelianbasistheorem} also implies that off of
some variety, the ring of invariant functions is finitely generated. So,
there are finitely many invariant free rational functions such that any invariant free polynomial can be written in terms of them, in
spite of the fact that the ring of invariant free polynomials
is not itself finitely generated.
Similar phenomena occur in real algebraic
geometry, such as the fact that 
positive polynomials cannot
be written as sums of squares of polynomials \cite{motzkin}, 
but can be written as sums of squares of rational functions, i.e.
Artin's resolution of Hilbert's seventeenth problem \cite{artin}.
\begin{theorem}
	With the notation of Theorem \ref{abelianbasistheorem} the ring of invariant functions in the algebra generated by
	$ v_1,\ldots, v_d, v_1^{-1}, \ldots, v_d^{-1}$ is finitely generated.
\end{theorem}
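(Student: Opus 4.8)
We work in the ring $\mathcal{R}$ generated by $v_1, \ldots, v_d, v_1^{-1}, \ldots, v_d^{-1}$ (a ring of free Laurent-type expressions once we formally adjoin inverses of the linear forms $v_i$), and we want to show the subring $\mathcal{R}^G$ of invariant elements is finitely generated. The key structural fact, already recorded in Theorem \ref{abelianbasistheorem}, is that monomials $v^J$ are simultaneous eigenfunctions: $\pi(g) v^J = \chi^J(g) v^J$, so $v^J$ is invariant if and only if $\chi^J = \tau$ (the trivial character). Since we now allow negative exponents, the exponent $J$ ranges over $\mathbb{Z}^d$ rather than the free monoid, and the invariance condition $\chi^J = \tau$ becomes a condition on a \emph{group} homomorphism. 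First I would make precise that $\mathcal{R}^G$ is spanned as a vector space by the invariant monomials $\{v^J : J \in \mathbb{Z}^d, \ \chi^J = \tau\}$, by the same averaging argument (apply $\frac{1}{|G|}\sum_{g} \pi(g)$, using that characters of abelian groups are orthogonal) used in the proof of Theorem \ref{abelianbasistheorem}.

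**The main point.** Let $\Gamma = \{J \in \mathbb{Z}^d : \chi^J = \tau\}$. This is the kernel of the homomorphism $\mathbb{Z}^d \to \widehat{G}$ sending $J \mapsto \chi^J$, hence a \emph{subgroup} of $\mathbb{Z}^d$, therefore finitely generated (indeed free of rank $\le d$) by the structure theorem for finitely generated abelian groups. Now the crucial wrinkle separating this from the commutative case: in $\mathbb{C}\langle x_\lambda\rangle$ the monomials in $v^J$ do not commute, so being spanned by $\{v^J : J \in \Gamma\}$ does not immediately mean generated-as-an-algebra by a generating set of the group $\Gamma$. I would instead argue as follows. Pick a finite generating set $J_1, \ldots, J_m$ for the group $\Gamma$, and also throw in $v_1^{\pm 1}, \ldots, v_d^{\pm 1}$ — wait, those are not invariant. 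The right move: let $S$ be the (finite) set consisting of $v^{J_i}$ and $v^{-J_i}$ for $i = 1, \ldots, m$. Any invariant monomial $v^J$ with $J \in \Gamma$ can be written $J = \sum_i n_i J_i$ with $n_i \in \mathbb{Z}$; but to realize $v^J$ as a \emph{word} in the elements of $S$ — not merely as having the right total exponent vector — I would order the letters of the word $v^J$ from left to right and absorb them greedily: reading $v^J$ as a concrete word $v_{a_1} v_{a_2} \cdots$ (with $v_{a}^{-1}$ letters allowed), at each prefix the partial exponent vector lies in $\mathbb{Z}^d$, and I track its image in $\widehat{G}$; since $\Gamma$ has finite index in $\mathbb{Z}^d$ (index dividing $|G|$), the partial exponent returns to $\Gamma$ at bounded intervals, and each such excursion is a word of bounded length whose exponent vector lies in $\Gamma$. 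That gives a \emph{finite} set of "primitive" invariant words generating $\mathcal{R}^G$ as an algebra — which is precisely the statement.

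**Execution and the obstacle.** Concretely: let $N$ be the index $[\mathbb{Z}^d : \Gamma]$, which is finite since $\widehat{G}$ is finite. Define $T$ to be the set of all invariant words $v^{J}$ (with entries from $\{v_i, v_i^{-1}\}$, read as a concrete word of length $\le 2N$ that is "primitive" in the sense that no proper nonempty prefix is invariant). This set is finite. Then an induction on word length — exactly parallel to the primitivity/greedy decomposition used in Section \ref{cyclic3example} and in the proof of Theorem \ref{freeringortho} — shows every invariant element of $\mathcal{R}$ is an algebra expression in $T$: given an invariant word $w$, its shortest nonempty invariant prefix has length $\le N \le 2N$ (since the partial-exponent image in the finite group $\widehat G$ must repeat within $N+1$ steps, and the difference of two equal images lies in $\Gamma$), factor it off, and recurse on the remainder, which is again invariant. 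I expect the genuine obstacle to be entirely bookkeeping: making rigorous what "the ring generated by $v_i, v_i^{-1}$" means as a subring of some ambient object (one wants the free skew field or at least the localization at the $v_i$, and to check the $v_i$ are indeed invertible there — each $v_i$ is a nonzero linear form, hence generically invertible on the row ball), and verifying that the greedy prefix decomposition is consistent with the noncommutative multiplication. Once the ambient ring is fixed and the monomial eigenbasis of Theorem \ref{abelianbasistheorem} is in hand, the finiteness is a short combinatorial argument about the finite-index subgroup $\Gamma \le \mathbb{Z}^d$.
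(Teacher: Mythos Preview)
Your setup is right in spirit but you have conflated two different groups, and the greedy prefix argument breaks as a result. The algebra generated by $v_1,\ldots,v_d,v_1^{-1},\ldots,v_d^{-1}$ is spanned by \emph{reduced words} in these letters, i.e.\ by the free group $F_d$, and the invariant ones form the kernel $H=\ker(F_d\to\hat G)$ of the map sending the $i$-th generator to $\chi_i$. Your subgroup $\Gamma\subset\mathbb Z^d$ is only the abelianisation of $H$; finite generation of $\Gamma$ says nothing directly about $H$.

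The specific failure is the sentence ``its shortest nonempty invariant prefix has length $\le N$.'' Pigeonhole on the prefix characters $c_0=1,c_1,\ldots$ in $\hat G$ only gives $c_i=c_j$ for some $0\le i<j\le N$; this produces an invariant \emph{infix} (the segment from position $i{+}1$ to $j$), not an invariant prefix, and in a noncommutative ring you cannot factor an infix out. Concretely, take $G=\mathbb Z_3$, $d=2$, $\chi_1=\omega$, $\chi_2=\omega^2$; then $N=3$, but $v_1v_1(v_2v_1)^kv_2v_2$ is a reduced invariant word whose only invariant prefixes are the empty word and itself, for every $k\ge 0$. So the set of primitive invariant words is infinite and your finite set $T$ does not exist.

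The paper's argument bypasses this entirely: it observes that $H$ has finite index in $F_d$ (since $\hat G$ is finite) and then invokes the Nielsen--Schreier theorem that finite-index subgroups of free groups are finitely generated. A finite generating set $h_1,\ldots,h_m$ for $H$ as a group, together with $h_1^{-1},\ldots,h_m^{-1}$, then generates the invariant subring as an algebra. This is the missing ingredient; your combinatorial bound cannot replace it.
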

\begin{proof}
We note that
by Pontryigan duality theorem \cite[Theorem 1.7.2]{rudin62},
the characters of an abelian group $G$ form a group $\hat{G}$ under multiplication
which is noncanonically isomorphic to $G.$
Let $F_d$ be the free group with $d$ generators.
Let $H = \{I \in F_d | \chi^I =0\}.$
Consider the short exact sequence
	$$0 \rightarrow H \rightarrow F_d \rightarrow \hat{G} \rightarrow 0.$$
So, $H$ is of finite index in $F_d$ and is thus finitely generated, which then
implies that the ring
is finitely generated.
\end{proof}

\subsection{Example: Ad hoc methods for symmetric functions in three variables}

	We now turn our attention to symmetric functions in three variables.

	Let  $u_{0},u_{1},u_{-1}$ be as in Section \ref{cyclic3example} and let  $(b_{n})_{n\in\mathbb{N}}$ be the constructed superorthogonal basis for cyclic free polynomials in $3$ variables, where  we fix $b_0 = u_0.$
	
	Let $\tau$ be the following action on a free function $f$: 
		$$\tau\cdot f(X_{1},X_{2},X_{3})=f(X_{1},X_{3},X_{2}).$$
	Note that
		$$\tau\cdot u_{0}=u_{0},$$   
		$$\tau\cdot u_{1}=u_{-1},$$
		$$\tau\cdot u_{-1}=u_{1}.$$
We recall that each $b_n$ 
	is of the form $$b_n = \prod_k u_{i_k}$$ such that
	$\sum_k i_k \equiv_3 0.$	
	So, $\tau \cdot b_n = b_{\tilde{n}}$
	for some $\tilde{n},$ since $\tau \cdot \prod_k u_{i_k} = \prod_k u_{-i_k},$
	and so $\sum_k -i_k \equiv_3 0.$

	For $n >0$ define
		$$c_{n}^{0}=\frac{b_{n}+\tau\cdot b_{n}}{\sqrt 2},$$
		$$c_{n}^{1}=\frac{b_{n}-\tau\cdot b_{n}}{\sqrt 2}.$$
	For $n =0,$ let $c^0_0 = u_0.$
	
	Note that each $c_{n}^{0}$ is a symmetric free polynomial and that the product of an even number of $c_{n}^{1}$ is also symmetric. In fact,
		$$\tau \cdot  c_{n}^{0} = c_{n}^{0},$$
		and
		$$\tau \cdot  c_{n}^{1} = -c_{n}^{1}.$$
	
	Set 
	$$B=\left\{\Pi_{i=1}^{N}c_{i}^{k_{i}}|
	\sum_{i=1}^{N}k_{i}\equiv_2 0, \forall n<N, \sum_{i=1}^{n}k_{i} \not\equiv_2 0\right\}.$$

	Now,
	$B$ is a superorthogonal basis for the symmetric free polynomials in 3 variables.	
	The elements of $B$ of degree 4 and less are given in the following table.
	\begin{center}
	\begin{tabular}{ c | c}
    			degree & basis elements \\ \hline
 		 	1 & $u_{0}$ \\
  			2 & $\frac{u_{1}u_{-1}+u_{-1}u_{1}}{\sqrt 2}$ \\
  			3 & $\frac{u_{1}^{3}+u_{-1}^{3}}{\sqrt 2},
  			\frac{u_{1}u_{0}u_{-1}+u_{-1}u_{0}u_{1}}{\sqrt 2}$ \\
  			4 & $\frac{u_{1}^{2}u_{-1}^{2}+u_{-1}^{2}u_{1}^{2}}{\sqrt 2},
  			\frac{u_{1}u_{0}u_{1}^{2}+u_{-1}u_{0}u_{-1}^{2}}{\sqrt 2},
  			\frac{u_{1}u_{0}^2u_{-1}+u_{-1}u_{0}^2u_{1}}{\sqrt 2},
  			$ \\
  			 &
  			$\frac{u_{1}^{2}u_{0}u_{1}+u_{-1}^{2}u_{0}u_{-1}}{\sqrt 2},
  			(\frac{u_{1}u_{-1}-u_{-1}u_{1}}{\sqrt 2})^{2}$
	\end{tabular}
	\end{center}
	Note the table agrees with the generating function obtained earlier.
	As there are $13$ elements of the basis of degree $5,$ we will stop here.
	
We remark that the method above of iteratively constructing can be applied, in principal, for any solvable group,
since we exploited the fact that $\mathbb{Z}_2\cong S_{3}/\mathbb{Z}_{3}$ acts on the ring of free polynomials invariant under $\mathbb{Z}_3.$ 
Since the ring of free polynomials invariant under $\mathbb{Z}_3$ is itself
isomorphic to a free algebra
in infinitely many variables, we are essentially repeating the construction done for
any abelian group.



\printindex
\bibliography{references}
\bibliographystyle{plain}

\end{document}